\documentclass{article}

\usepackage{zharticle}
\usepackage{graphicx,amsmath,amsfonts,amssymb,amsthm,mathtools,verbatim,booktabs}
\usepackage{enumitem,microtype}

\numberwithin{equation}{section}
\allowdisplaybreaks

\usepackage{amsthm}
\newtheoremstyle{zhthm}
  {\topsep}   
  {\topsep}   
  {}  
  {0pt}       
  {\sffamily\bfseries} 
  {}         
  {5pt plus 1pt minus 1pt} 
  {}          
\theoremstyle{zhthm}
\newtheorem{theorem}{Theorem}[section]
\newtheorem{proposition}[theorem]{Proposition}

\newtheorem{corollary}[theorem]{Corollary}

\newcounter{algorithm}
\renewcommand{\thealgorithm}{\arabic{algorithm}}
\newenvironment{algorithm}[1]{%
    \refstepcounter{algorithm}%
    \begin{list}{}{%
		\setlength{\rightmargin}{0\linewidth}%
		\setlength{\leftmargin}{.05\linewidth}%
    }%
		\small%
		\item[]{\setlength{\parskip}{0ex}\hrulefill\par\nopagebreak%
        {\sffamily\bfseries Algorithm~\thealgorithm}\ {\scshape#1}}%
        \begin{itemize}[label={}, leftmargin=0pt]%
}{%
    \end{itemize}%
    \vspace{-\parskip}
    {\setlength{\parskip}{-1ex}\nopagebreak\par\hrulefill}%
    \end{list}
}

\newcommand{\ones}{{\bf 1{}}}  
\newcommand{\reals}{{\bf R{}}}  
\newcommand{\diag}{\mathop{\bf diag}}  
\newcommand{\argmin}{\mathop{\rm argmin}}  
\newcommand{\ri}{\mathop{\bf relint}}  
\newcommand{\norm}[1]{{\lVert #1 \rVert}}  
\newcommand{\cK}{{\cal K{}}}
\newcommand{\fset}{{\cal F{}}}

\newcommand{\eg}{{\rmfamily\itshape e.g.}}
\newcommand{\ie}{{\rmfamily\itshape i.e.}}
\newcommand{\etal}{{\rmfamily\itshape et~al.}}

\usepackage{lipsum}

\usepackage[colorlinks]{hyperref}

\usepackage{listings}
\usepackage{xcolor}

\definecolor{codegreen}{rgb}{0,0.6,0}
\definecolor{codegray}{rgb}{0.5,0.5,0.5}
\definecolor{codepurple}{rgb}{0.58,0,0.82}
\definecolor{backcolour}{rgb}{0.95,0.95,0.98}

\lstdefinestyle{zhstyle}{
    backgroundcolor=\color{backcolour},
    commentstyle=\color{codegreen},
    keywordstyle=\color{magenta},
    numberstyle=\tiny\color{codegray},
    stringstyle=\color{codepurple},
    basicstyle=\ttfamily\small,
    breakatwhitespace=false,
    breaklines=true,
    captionpos=b,
    keepspaces=true,
    numbers=left,
    numbersep=5pt,
    showspaces=false,
    showstringspaces=false,
    showtabs=false,
    tabsize=2,
}
\lstdefinelanguage{zhpython}{%
    sensitive=true,%
    morekeywords={and, as, assert, break, class, continue, def, del, elif,%
        else, except, exec, finally, for, from, global, if, import, in, is,%
        lambda, not, or, pass, print, raise, return, try, while, with, yield},%
    morecomment=[l]\#,%
    morestring=[s]{'''}{'''},%
    morestring=[s]{"""}{"""},%
    morestring=[b]',%
    morestring=[b]"%
}

\title{Disciplined Bilevel Programming}
\author[1,2]{Hao Zhu}
\author[1,2]{Joschka Boedecker}
\affil[1]{IMBIT//BrainLinks-BrainTools}
\affil[2]{Department of Computer Science, University of Freiburg}

\begin{document}
\maketitle

\begin{abstract}
Bilevel optimization provides a natural modeling language for hierarchical decision problems.
However, applying existing numerical solvers usually requires substantial manual analysis and reformulation.
In this paper, we introduce \emph{disciplined bilevel programming} (DBLP), a symbolic framework that allows users to specify and solve optimistic bilevel problems in a high-level, human-readable way that is close to the mathematical formulation.
For problems with a disciplined nonlinear upper problem and a convex lower problem satisfying the disciplined parameterized programming rules, DBLP automatically canonicalizes the lower problem into conic form and constructs an equivalent single-level reformulation using the conic Karush-Kuhn-Tucker conditions.
We relax the resulting complementarity constraint and use a gap continuation procedure to approximately solve a sequence of smooth nonlinear problems.
We implement DBLP in the open-source Python package BLVPY, an extension of CVXPY for bilevel programming.
We demonstrate the modeling and solution capabilities of BLVPY on a range of bilevel optimization problems from several application domains.
The proposed framework and implementation allow users to specify and solve bilevel optimization problems within a few lines of code, without prior expertise in bilevel modeling and numerical optimization.
\end{abstract}

\newpage
\tableofcontents
\newpage

\section{Introduction}
We consider \emph{bilevel optimization problems} of the form
\begin{equation}\label{prob:blv}
    \begin{array}{ll}
        \mbox{minimize} & F_0(x, y)\\
        \mbox{subject to} & F_i(x, y) \leq 0, \quad i = 1, \ldots, m\\
        & y \in S(x),
    \end{array}
\end{equation}
where for a fixed $x \in \reals^n$, the constraint set $S(x)$ is defined as the solution set of the following \emph{lower problem} (or \emph{inner problem}):
\begin{equation}\label{prob:blv_lower}
    \begin{array}{rl}
    S(x) = \argmin_z & f_0(x, z)\\
    \mbox{subject to} & f_i(x, z) \leq 0, \quad i = 1, \ldots, p.
    \end{array}
\end{equation}
In this context, the problem (\ref{prob:blv}) is referred to as the \emph{upper problem} (or \emph{outer problem}), with $x \in \reals^n$ and $y \in \reals^k$ denoting the \emph{upper variable} and \emph{lower variable}, respectively.
Accordingly, the lower problem (\ref{prob:blv_lower}) is parameterized by the upper variable $x$.
The functions $F_0 \colon \reals^n \times \reals^k \to \reals$ and $f_0 \colon \reals^n \times \reals^k \to \reals$ are the upper- and lower-level objective functions, respectively, while $F_i$, $i = 1, \ldots, m$, and $f_i$, $i = 1, \ldots, p$, are their corresponding constraint functions.

Following a common convention in bilevel optimization~\cite{sinha2018review}, the generic formulations (\ref{prob:blv}) and (\ref{prob:blv_lower}) display only inequality constraints; equality constraints are also permitted in practice, but are omitted here for notational brevity.
Additionally, as shown in Shen~\etal~\cite{shen2016disciplined}, an equality constraint $F(x, y) = 0$ may instead be expressed indirectly as the pair of inequality constraints
\begin{equation*}
    F(x, y) \leq 0\quad\mbox{and}\quad -F(x, y) \leq 0.
\end{equation*}
We assume that the problem (\ref{prob:blv}) has \emph{optimistic} semantics, meaning that if the lower problem (\ref{prob:blv_lower}) has multiple optimal points (\ie, the set $S(x)$ is nonsingleton), then the upper problem (\ref{prob:blv}) chooses $y \in S(x)$ that is most favorable to the upper objective $F_0(x, y)$.
In the most general case, the bilevel problem (\ref{prob:blv}) is a nonconvex optimization problem that can be very difficult to solve, even if all involved functions $F_i$ and $f_i$ are convex.

\subsection{Previous and related work}\label{sec:prior_work}
\subsubsection{Hierarchical optimization}
Hierarchical optimization originates in Stackelberg games~\cite{stackelberg1952grundlagen} and was formulated as a mathematical program with an optimization problem in its constraints by Bracken and McGill~\cite{bracken1973mathematical}.
Bilevel models arise naturally when one decision maker must anticipate another's optimal response, with applications spanning economics, engineering, machine learning, and operations research.
For example, in transportation network design and toll setting, an authority chooses capacities or prices at the upper level, while travelers select routes at the lower level, often according to a traffic equilibrium; see, \eg, Bard~\cite[chapter 10]{bard1998practical} and Colson~\etal~\cite{colson2007overview}.
In energy and other regulated markets, a planner, regulator, or producer makes investment and pricing decisions while anticipating market clearing or the responses of consumers and competing firms~\cite[chapter 5]{dempe2020advances}.
In machine learning, the upper problem selects hyperparameters, regularization weights, or model architecture to minimize validation loss, whereas the lower problem trains the model on the training data~\cite{pedregosa2016hyperparameter,franceschi2018bilevel,lorraine2020optimizing,barratt2021least}; see also Dempe and Zemkoho~\cite[chapter 6]{dempe2020advances}.
Just to mention a few.

The distinction between optimistic and \emph{pessimistic} formulations of bilevel problems is essential when the lower solution mapping is set-valued:
The former permits an upper-favorable lower optimizer, whereas the latter protects against an upper-adverse one.
For more discussion about pessimistic bilevel problems, see Loridan and Morgan~\cite{loridan1996two} and Dempe~\cite{dempe2002foundations}.

There are many excellent books and surveys on bilevel optimization.
Bard~\cite{bard1998practical} gives a practical treatment of continuous, discrete, convex, and general bilevel programs, together with algorithms and applications in transportation, production planning, and pricing.
Dempe~\cite{dempe2002foundations} develops the theoretical foundations through parametric optimization, optimality conditions, stability and complexity analysis, and a detailed discussion of nonunique lower-level solutions.
Colson~\etal~\cite{colson2007overview} provide a concise survey of applications and solution methods and explain the connection between bilevel programs and mathematical programs with equilibrium constraints.
Sinha~\etal~\cite{sinha2018review} broaden the algorithmic review from classical methods to evolutionary approaches and discuss representative applications.
The edited volume of Dempe and Zemkoho~\cite{dempe2020advances} collects more recent developments in local, global, and heuristic algorithms, as well as extensions and applications including optimal control, energy markets, data analytics, and security.

\subsubsection{Solution methods}
In the most general case, finding an exact solution to a bilevel optimization problem of the form (\ref{prob:blv}) is NP-hard~\cite{hansen1992new,sinha2018review}, so practitioners usually rely on reformulations, approximations, or heuristic methods that do not guarantee global optimality to find an acceptable approximate solution.
We discuss two widely used approaches in the following paragraphs.
Descriptions of other methods, such as evolutionary algorithms and branch-and-bound methods for discrete bilevel programming, can be found in the literature listed above.

\paragraph{KKT single-level reformulations.}
When the lower problem (\ref{prob:blv_lower}) is convex and an appropriate constraint qualification holds, its Karush-Kuhn-Tucker (KKT) conditions replace lower optimality by primal feasibility,
stationarity, dual feasibility, and complementarity.
The resulting single-level problem is a mathematical program with \emph{equilibrium} (or specifically, \emph{complementarity}) constraints, and is exactly equivalent to the original bilevel problem (\ref{prob:blv}).
This classical connection and its stationarity theory are developed in \cite{ye1995optimality,luo1996mpec,scheel2000mpcc,dudda2006bilevel,dempe2006optimality,dempe2012reformulations} (just to list a few).
However, exact complementarity causes standard nonlinear programming constraint qualifications to fail, even when the original lower problem is regular.
Regularization and smoothing methods therefore solve nearby nonlinear programs and drive a relaxation parameter to zero~\cite{facchinei1999smoothing,scholtes2001regularization} (see also \S\ref{sec:approx_opt} for more details).

\paragraph{Differentiable optimization.}
Modern machine learning often treats bilevel problems by differentiating the upper objective through the procedure used to solve the lower problem.
One approach fixes a finite sequence of lower-level iterations and applies the chain rule through their update maps~\cite{maclaurin2015gradient,franceschi2018bilevel}.
Another assumes that the lower solution is locally unique and differentiates its first-order optimality equations using the implicit function theorem~\cite{pedregosa2016hyperparameter,lorraine2020optimizing}.
Closely related work derives and implements derivatives of solutions to parameterized optimization problems~\cite{gould2016differentiating,amos2017optnet,agrawal2019differentiable,agrawal2019cone,healey2026differentiating}.
In both approaches, the derivative depends on a particular choice of lower-level solution, \ie, either the result of a fixed number of algorithmic steps or a locally unique solution that varies smoothly with the upper variables.
Moreover, most differentiable optimization methods mentioned above assume an unconstrained upper problem or only simple constraints on the upper variables, while general constraints involving the lower solution, as in the problem (\ref{prob:blv}), are usually not supported.

\subsubsection{Limitations}
Despite the widely recognized applications and solution methods for bilevel optimization, a significant gap remains between natural problem formulations (\eg, of the form (\ref{prob:blv})) and the \emph{canonical} forms required by existing solution methods.
For example, a KKT reformulation first requires verifying lower-level convexity and a suitable constraint qualification.
It then introduces a dual variable and a complementarity condition for each lower-level inequality, which must be encoded in a form accepted by the chosen solver~\cite[\S2 and \S4]{dias2024bileveljump}.
Carrying out these steps correctly requires substantial expertise and is often tedious and error-prone, even for experienced practitioners.

This gap has been partially addressed by different \emph{domain specific languages} (DSLs) for bilevel optimization in different programming languages.
The early Python package PAO~\cite{hart2019pao} extends Pyomo~\cite{hart2011pyomo,bynum2021pyomo} to support the symbolic modeling and solving of multilevel optimization problems, but is no longer actively updated or maintained.
The more recent Julia package BilevelJuMP.jl~\cite{dias2024bileveljump} extends JuMP~\cite{dunning2017jump,lubin2023jump} to support bilevel optimization with conic constraints and quadratic objectives in the lower problem, but it does not support nonlinear expressions at that level.
As a result, users must still manually reformulate their problems into at least the canonical conic forms required by BilevelJuMP.jl, which is not ideal for users not well-versed in convex optimization.

\subsection{Contributions}
In this paper, we introduce \emph{disciplined bilevel programming} (DBLP), a symbolic modeling framework for bilevel optimization problems that allows users to specify and solve bilevel problems in a natural, human-readable way that is very close to the original mathematical formulation (\ref{prob:blv}).
Given a compatible symbolic bilevel problem specification, DBLP allows automatic canonicalization of the problem into a standard single-level optimization problem that can be solved by existing numerical solvers for general nonlinear optimization.
The associated DSL is implemented in the Python package BLVPY, which is fully open-source and available at
\begin{quote}
    \url{https://github.com/dxogrp/blvpy}.
\end{quote}

DBLP is based on the ideas of \emph{disciplined optimization}, whose principles consist in allowing users to construct optimization problems from a set of basic functions and composition rules, while guaranteeing that the resulting problem is compatible with a specific class of solvers (see \S\ref{sec:dsopt} for more discussion and references).
We adapt the KKT reformulation of bilevel problems to the disciplined optimization framework, and propose a rewriting procedure that transforms a DBLP-compliant bilevel problem into a single-level optimization problem compatible with general nonlinear solvers.
As a result, users can specify and solve bilevel optimization problems in short scripts without the expert knowledge of mathematical analysis or optimization.
As far as we know, there is no existing work that combines bilevel optimization with the disciplined optimization philosophy.

We should also note that the purpose of this work is \emph{not} to propose a new solution method or theoretical results for bilevel optimization, but to fill the gap between the original mathematical problem formulation and the canonical forms required by a numerical solver, based on many existing results in the literature.

\subsection{Outline}
The rest of this paper is organized as follows.
In \S\ref{sec:dsopt}, we provide a short review of disciplined optimization, focusing on disciplined convex, parameterized, and nonlinear programming.
In \S\ref{sec:dblp}, we introduce the DBLP framework and develop the lower problem canonicalization and exact single-level reformulation via the KKT conditions.
In \S\ref{sec:approx_opt}, we present the relaxed complementarity condition, its approximate lower-level optimality guarantees, and the gap continuation procedure for (approximately) solving the reformulated problem.
The BLVPY implementation of the proposed DBLP framework and its basic usage are described in \S\ref{sec:impl}.
Finally, in \S\ref{sec:example}, we present numerical examples of applications appearing in different domains, followed by some discussion and comments in \S\ref{sec:discussion}.

\section{Disciplined optimization}\label{sec:dsopt}
In this section, we provide a short introduction to disciplined optimization and present some of its key concepts and results.

\subsection{Disciplined convex programming}
The original idea of disciplined optimization was proposed by Grant~\etal~\cite{grant2005disciplined,grant2006disciplined} in the context of convex optimization, and is referred to as \emph{disciplined convex programming} (DCP).
The fundamental philosophy of DCP is that, rather than constructing objective and constraint functions without regard for convexity, users draw from a library of \emph{atomic functions} with known convexity properties and combine them in ways which the convexity preserving operations insure convex outcomes.

Specifically, a DCP problem has the form
\begin{equation}\label{prob:dcp}
    \begin{array}{ll}
        \mbox{minimize/maximize} & f(x)\\
        \mbox{subject to} & p_i(x) \sim q_i(x),\quad i = 1, \ldots, m,
    \end{array}
\end{equation}
where $x \in \reals^n$ is the variable, the function $f \colon \reals^n \to \reals$ is the objective, and $p_i, q_i \colon \reals^n \to \reals$ are the left-hand side and right-hand side functions of the $i$th constraint.
The relational operator $\sim$ represents one of the relations $\leq$, $\geq$, or $=$.
In DCP this problem must be convex, which imposes the following additional rules on the curvature of the involved functions in the problem (\ref{prob:dcp}) listed below.
\begin{itemize}
    \item For a minimization problem, the objective function $f$ must be convex; for a maximization problem, $f$ must be concave.
    \item For a constraint of the form $p_i(x) \leq q_i(x)$, the function $p_i$ must be convex, and $q_i$ must be concave.
    \item For a constraint of the form $p_i(x) \geq q_i(x)$, the function $p_i$ must be concave, and $q_i$ must be convex.
    \item For a constraint of the form $p_i(x) = q_i(x)$, both $p_i$ and $q_i$ must be affine.
\end{itemize}
To verify the convexity of a DCP problem in the form (\ref{prob:dcp}), each function (or \emph{expression}) in the problem is parsed into a \emph{composition tree}, whose nodes represent functions with known convexity or convexity preserving operations (see, \eg, \cite[\S3.2]{boyd2004convex}).
We (or a computer) can therefore verify the convexity of the objective and constraints constructively by traversing their composition trees from the leaves to the root and checking whether the resulting functions satisfy the curvature requirements listed above (see Grant~\etal~\cite{grant2006disciplined} for some specific examples).

The DCP framework was originally implemented in the MATLAB package CVX~\cite{grant2014cvx}.
It was later implemented by several other DSLs in different programming languages, including Convex.jl~\cite{udell2014convex} for Julia, CVXPY~\cite{diamond2016cvxpy,agrawal2018rewriting,zhang2026cvxpy} for Python, CVXR~\cite{fu2020cvxr} for R, and CVXRust~\cite{diamond2025cvxrust} for Rust.
Given a DCP-compliant convex program specified in any of these DSLs, one of the most important practical implications is that the problem can be automatically canonicalized.
That is, a user only needs to specify the target problem in a high-level expression close to the mathematical formulation, and the DSL modeling system will automatically convert it into a standard form accepted by general convex conic solvers~\cite{agrawal2018rewriting}.
Finally, a compatible numerical solver, such as SCS~\cite{odonoghue2016conic}, Clarabel~\cite{goulart2024clarabel}, or MOSEK~\cite{mosek2026mosek}, is called to solve the canonicalized problem.

Our work here extends the modeling language CVXPY to support bilevel programming.
Although CVXPY has been extended over the years to support many types of nonconvex optimization problems, such as \emph{stochastic}~\cite{ali2015disciplined}, \emph{convex-concave}~\cite{shen2016disciplined}, \emph{multi-convex}~\cite{shen2017disciplined}, \emph{geometric}~\cite{agrawal2019disciplined}, \emph{quasiconvex}~\cite{agrawal2020disciplined}, \emph{saddle}~\cite{schiele2024disciplined}, and \emph{biconvex}~\cite{zhu2026disciplined} optimization problems, to the best of our knowledge, disciplined modeling of bilevel optimization has not been implemented in any of these DSLs.

\subsection{Problems involving parameters}
One of the key extensions of the original DCP framework from Grant~\etal~\cite{grant2006disciplined} and Diamond~\etal~\cite{diamond2016cvxpy} that is closely related to bilevel optimization is the concept of \emph{disciplined parameterized programming} (DPP)~\cite{agrawal2019differentiable}.
DPP deals with convex optimization problems that involve \emph{parameters}, which are variables whose values are fixed when the problem is solved, but can be changed between solves.
For example, the lower problem (\ref{prob:blv_lower}) is a parameterized optimization problem with parameter $x \in \reals^n$.

DPP imposes additional restrictions on the original DCP ruleset.
Thus, every DPP-compliant problem is also DCP-compliant, but the converse does not hold.
Specifically, DPP introduces two restrictions on the involved functions, or expressions, of a parameterized convex problem:
\begin{enumerate}
    \item In DCP, all parameters themselves are classified as constants, while in DPP, parameters are classified as affine expressions.
    \item In DCP, the product expression $\phi(u, v) = uv$ is affine if either $u$ or $v$ is constant (\ie, does not depend on the variables), while in DPP, the product expression $\phi(u, v) = uv$ is affine if
        \begin{itemize}
            \item either subexpression $u$ or $v$ is constant (\ie, depends on neither the parameters nor the variables), or
            \item one of the expressions $u$ and $v$ is \emph{parameter-affine} (\ie, does not involve variables and is affine in its parameters) and the other is \emph{parameter-free} (\ie, does not involve parameters).
        \end{itemize}
\end{enumerate}
The rules above describe the basic DPP restrictions, but they can be (and actually, have been) extended to cover additional combinations of expressions and parameters~\cite{zhang2026cvxpy}.

As DCP generates programs that are guaranteed to be convex, DPP generates convex programs that are additionally guaranteed to be reducible to \emph{affine-solve-affine} (ASA) compatible canonical forms~\cite{agrawal2019differentiable}.
This means that, for a DPP-compliant problem, both the map from the original problem parameters to the canonical problem data and the map from the canonical solution back to the original solution are affine.
Furthermore, the coefficients of these affine maps are constant, independent of the specific values of the problem parameters.

\subsection{Nonlinear optimization problems}
Disciplined nonlinear programming (DNLP) extends the atom-and-composition approach of DCP to nonlinear optimization problems that need not be convex~\cite{cederberg2026disciplined}.
It allows arbitrary smooth expressions and classifies certain nonsmooth expressions as \emph{linearizable convex} or \emph{linearizable concave} using composition rules analogous to those of DCP\@.
Specifically, a DNLP-compliant problem satisfies the following rules:
\begin{itemize}
    \item For a minimization problem, the objective must be linearizable convex; for a maximization problem, it must be linearizable concave.
    \item For a constraint of the form $p_i(x) \leq q_i(x)$, the function $p_i$ must be linearizable convex, and $q_i$ must be linearizable concave.
    \item For a constraint of the form $p_i(x) \geq q_i(x)$, the function $p_i$ must be linearizable concave, and $q_i$ must be linearizable convex.
    \item For a constraint of the form $p_i(x) = q_i(x)$, both $p_i$ and $q_i$ must be smooth.
\end{itemize}

A DNLP-compliant problem can be canonicalized into an equivalent smooth nonlinear program.
During this process, admissible nonsmooth convex or concave subexpressions are replaced by auxiliary variables and appropriate epigraph or hypograph constraints, which are then expressed using smooth functions.
For example, the epigraph constraint $t \geq |u|$ can be written as the pair of smooth inequalities $-t \leq u \leq t$.
Finally, a canonicalized smooth nonlinear program is obtained, which can be solved by standard nonlinear solvers such as IPOPT~\cite{wachter2006implementation}, KNITRO~\cite{byrd2006knitro}, COPT~\cite{ge2022cardinal}, and Uno~\cite{vanaret2026implementing,vanaret2026uno}.

This DCP-style canonicalization implemented in DNLP has several practical advantages.
It allows nonsmooth modeling constructs to be handled by standard smooth nonlinear solvers, simplifies the construction of a valid initial point, and avoids reformulations that unnecessarily violate regularity conditions such as the linear independence constraint qualification.
These features generally improve the robustness of the nonlinear optimization solution process and the quality of the resulting solution~\cite{cederberg2026disciplined}.
Unlike DCP, however, DNLP does not certify convexity or global optimality.

\section{Disciplined bilevel programming}\label{sec:dblp}
We now present the main ideas of DBLP, based on the concepts of disciplined optimization introduced in the previous section.
We define a \emph{disciplined bilevel program} as a bilevel optimization problem of the form (\ref{prob:blv}) that satisfies the following rules:
\begin{itemize}
    \item The objective and constraint functions $F_i \colon \reals^n \times \reals^k \to \reals$ for $i = 0, 1, \ldots, m$ of the upper problem (\ref{prob:blv}) are DNLP-compliant with variables $x \in \reals^n$ and $y \in \reals^k$.
    \item The objective and constraint functions $f_i \colon \reals^n \times \reals^k \to \reals$ for $i = 0, 1, \ldots, p$ of the lower problem (\ref{prob:blv_lower}) are DPP-compliant with variable $z \in \reals^k$, so that the lower problem is a disciplined convex program, parameterized by $x \in \reals^n$.
\end{itemize}
These two rules allow us to conduct the following canonicalization of the bilevel problem (\ref{prob:blv}) into a single-level optimization problem.

\subsection{Lower problem canonicalization}
Suppose the lower problem (\ref{prob:blv_lower}) is a DPP-compliant convex program with parameter $x \in \reals^n$.
Then this problem can be canonicalized into a standard conic form
\begin{equation}\label{prob:lower_cone}
    \begin{array}{ll}
        \mbox{minimize} & {c(x)}^T u + d(x)\\
        \mbox{subject to} & A(x)u + s = b(x)\\
        & s \in \cK,
    \end{array}
\end{equation}
where the vectors $u$ and $s$ are the variables, and $c(x)$, $d(x)$, $A(x)$, and $b(x)$ are the problem data.
The cone $\cK$ is a Cartesian product of proper convex cones.
Additionally, by Agrawal~\etal~\cite{agrawal2019differentiable}, the map from the original problem parameter $x$ to the group of canonical problem data, given by
\begin{equation*}
    x \mapsto (c(x), d(x), A(x), b(x)),
\end{equation*}
is affine, with coefficients that are independent of the specific value of $x$.
Note that the offset $d(x)$ in the objective of the canonical form (\ref{prob:lower_cone}) is a constant term which does not affect the optimal point of the lower problem (\ref{prob:blv_lower}).
We include it here for notational simplicity in the following derivations.

\subsection{Standing assumptions}\label{sec:asps}
Before proceeding with the canonicalization of the bilevel problem (\ref{prob:blv}) via the conic formulation (\ref{prob:lower_cone}), we state the following convenient sufficient assumptions, which apply throughout the remainder of this paper:
\begin{itemize}
    \item The admissible domain of the lower-level parameter $x$ is compact.
        The functions $F_i$ and $f_i$ in (\ref{prob:blv}), and $c$, $d$, $A$, and $b$ in (\ref{prob:lower_cone}) are continuous.
        The set defined by the upper constraints of (\ref{prob:blv}) is closed.
    \item The canonicalization is \emph{lossless}, \ie, it preserves both the optimal value and the complete solution set through a fixed affine recovery map $\rho$.
        In other words, let $p^\star(x)$ and $q^\star(x)$ be the optimal values of the original lower problem (\ref{prob:blv_lower}) and its canonical form (\ref{prob:lower_cone}), respectively.
        Then for every feasible $x$, we have
        \begin{equation*}
            p^\star(x)=q^\star(x)\quad\mbox{and}\quad
            S(x)=\left\{\rho(u)\ \middle|\ \begin{array}{l}
                {c(x)}^T u+d(x)=q^\star(x)\\
                A(x)u+s=b(x)\\
                s\in\cK\\
            \end{array}\right\}.
        \end{equation*}
        Moreover, every feasible $(u,s)$ yields a feasible lower point $\rho(u)$ satisfying
    \begin{equation*}
        f_0(x,\rho(u)) \leq {c(x)}^T u+d(x).
    \end{equation*}
    \item For every feasible $x$, the lower problem has a finite attained optimum.
    \item For every feasible $x$, the canonical problem satisfies Slater's condition, \ie, there exist $\tilde{u}$ and $\tilde{s}$ such that
    \begin{equation*}
        A(x)\tilde{u}+\tilde{s}=b(x), \quad \tilde{s} \in \ri \cK,
    \end{equation*}
    where $\ri C$ denotes the relative interior of the set $C$.
\end{itemize}
Since $\cK$ is a Cartesian product of closed convex cones, the last two assumptions imply strong duality and dual attainment for the canonical problem (\ref{prob:lower_cone})~\cite[\S5.9]{boyd2004convex}.

The aforementioned technical conditions are standard in parametric and bilevel optimization and are expected to hold for the intended applications~\cite[chapter 4]{dempe2002foundations}.
Our implementation (see \S\ref{sec:impl}) verifies the structural conditions that can be checked symbolically, including DPP and DNLP compliance and the use of a fixed, audited reduction path for lossless canonicalization~\cite{agrawal2018rewriting,agrawal2019differentiable,cederberg2026disciplined,zhang2026cvxpy}; the remaining analytic conditions, such as attainment and Slater's condition, are treated as user-asserted preconditions (which are, nevertheless, satisfied in most practical applications involving convex lower problems).

\subsection{Conic optimality}
We define the \emph{dual cone} of $\cK$ in the (primal) problem (\ref{prob:lower_cone}) as
\begin{equation*}
    \cK^* = \{\lambda \mid \mbox{$\lambda^T s \geq 0$ for all $s \in \cK$}\}.
\end{equation*}
Then the dual problem of the canonical conic lower problem (\ref{prob:lower_cone}) is
\begin{equation*}
    \begin{array}{ll}
        \mbox{maximize} & -{b(x)}^T \lambda + d(x)\\
        \mbox{subject to} & {A(x)}^T \lambda + c(x) = 0\\
        & \lambda \in \cK^*,
    \end{array}
\end{equation*}
where $\lambda$ is the (dual) variable and $x$ is the parameter (see, \eg, \cite[page 266]{boyd2004convex}).
According to the assumptions stated in \S\ref{sec:asps}, we have the following strong duality result:
For every feasible $x$, a point $y$ solves the lower problem if and only if there exist $u$, $s$, and $\lambda$ satisfying
\begin{equation}\label{eq:kkt_conic}
    \begin{array}{rcl}
        y &=& \rho(u)\\
        A(x)u+s &=& b(x)\\
        {A(x)}^T \lambda + c(x) &=& 0\\
        s &\in& \cK\\
        \lambda &\in& \cK^*\\
        \lambda^T s &=& 0.
    \end{array}
\end{equation}
The first equation in (\ref{eq:kkt_conic}) recovers the original lower variable $y$ from the canonical variable $u$, while the remaining equations are the primal-feasibility, dual-feasibility, and complementarity conditions for the canonical lower problem (\ref{prob:lower_cone})~\cite[\S2.1]{odonoghue2016conic}.
In other words, the KKT conditions (\ref{eq:kkt_conic}) are necessary and sufficient for optimality of the lower problem (\ref{prob:blv_lower}), and therefore, the set of lower optimal points $S(x)$ can be equivalently expressed as
\begin{equation*}
    S(x) = \left\{\rho(u)\ \middle|\ \begin{array}{l}
        A(x)u+s=b(x)\\
        {A(x)}^T \lambda + c(x)=0\\
        s\in\cK, \quad \lambda\in\cK^*\\
        \lambda^T s=0
    \end{array}\right\}.
\end{equation*}

\subsection{Single-level reformulation}
A direct implication of the KKT conditions (\ref{eq:kkt_conic}) is that the bilevel problem (\ref{prob:blv}) can be equivalently expressed as a single-level optimization problem with variables $x$, $u$, $s$, and $\lambda$.
Specifically, the resulting problem is a nonlinear program with complementarity constraints~\cite{luo1996mpec,scheel2000mpcc}, given by
\begin{equation}\label{prob:single_level}
    \begin{array}{ll}
        \mbox{minimize} & F_0(x, \rho(u))\\
        \mbox{subject to} & F_i(x, \rho(u)) \leq 0, \quad i = 1, \ldots, m\\
        & A(x)u+s = b(x)\\
        & {A(x)}^T \lambda + c(x) = 0\\
        & s \in \cK,\quad \lambda \in \cK^*\\
        & \lambda^T s = 0,
    \end{array}
\end{equation}
where the problem variables are the original upper variable $x$, the canonical lower variable $u$, the slack variable $s$, and the dual variable $\lambda$.
We use the standard term \emph{lift} to describe an extended representation of a given optimization problem obtained by introducing auxiliary variables~\cite{gouveia2013lifts}.
Accordingly, a feasible point $(x, u, s, \lambda)$ of the single-level reformulation (\ref{prob:single_level}) lifts the corresponding point $(x, y)$ of the original bilevel problem (\ref{prob:blv}), with $y = \rho(u)$.
In this context, the single-level reformulation (\ref{prob:single_level}) is sometimes called a \emph{lifted problem} of (\ref{prob:blv}).

Recall that $c(x)$, $A(x)$, and $b(x)$ are affine in $x$, and that the solution recovery map $\rho$ is affine.
Consequently, the compositions $F_i(x,\rho(u))$ preserve the DNLP compliance (as well as the convexity) of the upper-level expressions, while the equality constraints introduced by the KKT reformulation are smooth.
The cone-membership constraints remain convex in $s$ and $\lambda$, while the complementarity equation $\lambda^T s=0$ is bilinear and nonconvex.
Thus, even when all $F_i$ are convex, the single-level reformulation is generally nonconvex because of complementarity and the parameter-variable products in the primal- and dual-feasibility equations.

Nevertheless, according to the discussion above, if the original bilevel problem (\ref{prob:blv}) is DBLP-compliant (according to the rules defined at the beginning of \S\ref{sec:dblp}), the single-level reformulation (\ref{prob:single_level}) is DNLP-compliant, and hence can be canonicalized into a smooth nonlinear program that can be solved by standard nonlinear solvers.

\subsection{A simple example}
As a minimal example, consider the bilevel problem
\begin{equation}\label{prob:example_bilevel}
    \begin{array}{ll}
        \mbox{minimize} & \norm{x-g}_2^2 + \gamma\norm{y-h}_2^2\\
        \mbox{subject to} & -r\ones \preceq x \preceq r\ones\\
        & y \in S(x)
    \end{array}
\end{equation}
with variables $x, y \in \reals^n$, where $g, h \in \reals^n$, $\gamma > 0$, and $r > 0$ are problem data, and
\begin{equation}\label{prob:example_lower}
    \begin{array}{rl}
    S(x) = \argmin_z & \norm{z - x}_1\\
    \mbox{subject to} & z \succeq 0.
    \end{array}
\end{equation}
All vector inequalities in this example are componentwise.

To rewrite the bilevel problem (\ref{prob:example_bilevel}) into the single-level form (\ref{prob:single_level}), we first canonicalize the lower problem (\ref{prob:example_lower}) into the conic form (\ref{prob:lower_cone}).
Introducing an epigraph variable $t \in \reals^n$, the lower problem can be written as
\begin{equation*}
    \begin{array}{ll}
        \mbox{minimize} & \ones^T t\\
        \mbox{subject to} & -t \preceq x - z \preceq t\\
        & z \succeq 0.
    \end{array}
\end{equation*}
Define the canonical variable $u$ and slack variable $s$ as
\begin{equation*}
    u = \left[\begin{array}{c}
        z \\ t
    \end{array}\right] \in \reals^{2n}
    \quad\mbox{and}\quad
    s = \left[\begin{array}{c}
        s_1\\s_2\\s_3
    \end{array}\right] \in \reals^{3n}.
\end{equation*}
The problem then has the canonical conic form
(\ref{prob:lower_cone}) with
\begin{equation*}
    c = \left[\begin{array}{c}
        0 \\ \ones
    \end{array}\right],\qquad
    d(x) = 0,\qquad
    A = \left[\begin{array}{cc}
         I & -I\\
        -I & -I\\
        -I & 0
    \end{array}\right],\qquad
    b(x) = \left[\begin{array}{c}
         x \\ -x \\ 0
    \end{array}\right],\qquad
    \cK = \reals_+^{3n},
\end{equation*}
and the solution recovery map is the coordinate projection
\begin{equation*}
    \rho(u) = \left[\begin{array}{cc}
        I & 0
    \end{array}\right] u = z.
\end{equation*}
Note that the canonical problem data and the recovery map are indeed affine in the parameter $x$ and canonical variable $u$, respectively.
Moreover, canonical feasibility is equivalent to $z\succeq0$ and $t\succeq|z-x|$, and therefore
$\ones^T t \geq \norm{z-x}_1$.
Conversely, every lower-feasible $z$ admits a canonical feasible lift with $t=|z-x|$, for which equality holds.
Thus, the canonicalization preserves the optimal value and complete solution set under the recovery map $\rho(u)=z$, and is lossless in the sense of the second assumption in \S\ref{sec:asps}.

Since the nonnegative orthant is self-dual, partition the dual variable
as
\[
    \lambda=\left[\begin{array}{c}
        \lambda_1\\\lambda_2\\\lambda_3
    \end{array}\right]\in\reals_+^{3n},
\]
conformably with $s$.
The dual of the canonical lower problem associated with (\ref{prob:example_bilevel}) is then given by
\begin{equation*}
    \begin{array}{ll}
        \mbox{maximize} & x^T(\lambda_2-\lambda_1)\\
        \mbox{subject to}
            & \lambda_1-\lambda_2-\lambda_3=0\\
            & -\lambda_1-\lambda_2+\ones=0\\
            & \lambda_1, \lambda_2, \lambda_3 \succeq 0.
    \end{array}
\end{equation*}
The canonical lower problem is strictly feasible for every $x$:
One may choose any $z\succ0$ and then choose $t$ sufficiently large.
It also has a finite attained optimum.
Consequently, strong duality holds and the lower-level condition $y\in S(x)$ is equivalent to the existence of $z,t,s_1,s_2,s_3,\lambda_1,\lambda_2,\lambda_3$ satisfying
\begin{equation}\label{eq:example_kkt}
    \begin{array}{rcl}
        y &=& z\\
        z-t+s_1 &=& x\\
        -z-t+s_2 &=& -x\\
        -z+s_3 &=& 0\\
        \lambda_1-\lambda_2-\lambda_3 &=& 0\\
        -\lambda_1-\lambda_2+\ones &=& 0\\
        s_1,s_2,s_3 &\succeq& 0\\
        \lambda_1,\lambda_2,\lambda_3 &\succeq& 0\\
        \lambda_1^T s_1+\lambda_2^T s_2+\lambda_3^T s_3 &=& 0.
    \end{array}
\end{equation}
The first equality in (\ref{eq:example_kkt}) identifies the recovered lower variable $y$ with $z$.
The next three equalities, together with $s_1,s_2,s_3\succeq0$, impose primal feasibility; the following two equalities, together with $\lambda_1,\lambda_2,\lambda_3\succeq0$, impose dual feasibility; and the final equality imposes complementarity.
Because every term in the final sum is nonnegative, this scalar condition is equivalent to componentwise complementarity between each $s_i$ and $\lambda_i$.

Finally, substituting $y=z$ and the remaining conditions in (\ref{eq:example_kkt}) for the lower-level constraint in (\ref{prob:example_bilevel}) gives the single-level problem
\begin{equation}\label{prob:example_single_level}
    \begin{array}{ll}
        \mbox{minimize}
            & \norm{x-g}_2^2+\gamma\norm{z-h}_2^2\\
        \mbox{subject to}
            & -r\ones\preceq x\preceq r\ones\\
            & z-t+s_1=x\\
            & -z-t+s_2=-x\\
            & -z+s_3=0\\
            & \lambda_1-\lambda_2-\lambda_3=0\\
            & -\lambda_1-\lambda_2+\ones=0\\
            & s \succeq 0,\quad \lambda \succeq 0\\
            & \lambda^T s=0,
    \end{array}
\end{equation}
with variables $x, z, t \in \reals^n$ and $s, \lambda \in \reals^{3n}$, which is equivalent to the original bilevel problem (\ref{prob:example_bilevel}).
The single-level reformulation (\ref{prob:example_single_level}) of the original bilevel problem (\ref{prob:example_bilevel}) is easily seen to be DNLP-compliant.
From this example, we can also see that performing the canonicalization and KKT reformulation by hand is tedious and error-prone, even for a simple bilevel optimization problem like the one presented.

\section{Approximate optimality and continuation}\label{sec:approx_opt}
Although the single-level reformulation (\ref{prob:single_level}) is DNLP-compliant, its exact complementarity constraint gives rise to a degenerate feasible geometry for which standard nonlinear programming constraint qualifications generally fail.
Consequently, directly applying off-the-shelf nonlinear solvers may lead to numerical difficulties and weakened convergence guarantees~\cite{scheel2000mpcc,scholtes2001regularization}.
As a result, we propose a continuation method that solves a sequence of approximate problems with relaxed complementarity constraints and gradually drives the relaxation parameter to zero.

\subsection{Relaxed complementarity}
We consider the following relaxation of the single-level reformulation (\ref{prob:single_level}) with a relaxation parameter $\epsilon > 0$:
\begin{equation}\label{prob:approx_single_level}
    \begin{array}{ll}
        \mbox{minimize} & F_0(x, \rho(u))\\
        \mbox{subject to} & F_i(x, \rho(u)) \leq 0, \quad i = 1, \ldots, m\\
        & A(x)u+s = b(x)\\
        & {A(x)}^T \lambda + c(x) = 0\\
        & s \in \cK,\quad \lambda \in \cK^*\\
        & \lambda^T s \leq \epsilon,
    \end{array}
\end{equation}
where the problem variables are the same as in (\ref{prob:single_level}).

The relaxation parameter $\epsilon$ in the problem (\ref{prob:approx_single_level}) has the following direct lower-level optimality interpretation.

\begin{proposition}\label{prop:lower_certificate}
    \emph{Lower-level objective certificate.}
    Under the assumptions in \S\ref{sec:asps}, every feasible point of (\ref{prob:approx_single_level}) satisfies
    \begin{equation}\label{eq:lower_certificate}
        0 \leq f_0(x,\rho(u))-p^\star(x)
        \leq \lambda^T s \leq \epsilon,
    \end{equation}
    where $f_0(x,\rho(u))$ is the lower-level objective evaluated at the recovered lower point $\rho(u)$, and $p^\star(x)$ is the optimal value of the lower problem (\ref{prob:blv_lower}) for the given parameter $x$.
\end{proposition}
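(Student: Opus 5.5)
The argument is a standard weak-duality computation on the canonical conic problem (\ref{prob:lower_cone}), combined with the lossless-canonicalization assumptions of \S\ref{sec:asps}. Fix a feasible point $(x,u,s,\lambda)$ of (\ref{prob:approx_single_level}). The chain (\ref{eq:lower_certificate}) splits into four inequalities. The last one, $\lambda^T s\leq\epsilon$, is simply the relaxed complementarity constraint. It remains to show $\lambda^T s\geq 0$, that $f_0(x,\rho(u))\geq p^\star(x)$, and that $f_0(x,\rho(u))-p^\star(x)\leq\lambda^T s$.

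\textbf{Lower bound.} Primal feasibility $A(x)u+s=b(x)$ with $s\in\cK$ means $(u,s)$ is feasible for the canonical problem. By the lossless assumption, $\rho(u)$ is then a feasible point of the lower problem (\ref{prob:blv_lower}), so $f_0(x,\rho(u))\geq p^\star(x)$ by the definition of the optimal value. Separately, $\lambda\in\cK^*$ and $s\in\cK$ give $\lambda^T s\geq 0$ directly from the definition of the dual cone. Together these settle the outer bounds.

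\textbf{Upper bound.} First I would compute the canonical duality gap. Using $c(x)=-A(x)^T\lambda$ and $A(x)u=b(x)-s$,
\[
c(x)^Tu+d(x)-\bigl(-b(x)^T\lambda+d(x)\bigr)=-\lambda^TA(x)u+b(x)^T\lambda=-\lambda^T(b(x)-s)+b(x)^T\lambda=\lambda^Ts.
\]
Next, $\lambda$ is dual feasible, so weak duality gives $-b(x)^T\lambda+d(x)\leq q^\star(x)$. Combining this with the gap identity yields $c(x)^Tu+d(x)\leq q^\star(x)+\lambda^Ts$. Finally, the lossless assumption supplies $f_0(x,\rho(u))\leq c(x)^Tu+d(x)$ and $q^\star(x)=p^\star(x)$. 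Chaining these gives $f_0(x,\rho(u))-p^\star(x)\leq\lambda^Ts$, as required.

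\textbf{Main subtlety.} The step needing care is justifying weak duality $-b^T\lambda+d\leq q^\star$. It follows from the same gap identity applied to an arbitrary canonical feasible pair $(u',s')$: we get $c^Tu'+d-(-b^T\lambda+d)=\lambda^Ts'\geq0$, and taking the infimum over $(u',s')$ gives the claim. This needs only $\lambda\in\cK^*$ and does not use Slater's condition.

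One must also confirm that $x$ lies in the range where the assumptions of \S\ref{sec:asps} apply, i.e.\ that $x$ is feasible. This holds because the canonical problem has a feasible point $(u,s)$, and the finite-optimum assumption makes $p^\star(x)$ finite, so the subtraction in (\ref{eq:lower_certificate}) is well defined.
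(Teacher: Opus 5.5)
Your proposal is correct and follows essentially the same route as the paper's proof. Both establish the gap identity $c(x)^Tu+d(x)-\bigl(-b(x)^T\lambda+d(x)\bigr)=\lambda^Ts$, apply weak duality, and then use losslessness: $q^\star(x)=p^\star(x)$, $\rho(u)$ is lower-feasible, and $f_0(x,\rho(u))\leq c(x)^Tu+d(x)$. The only differences are that you derive weak duality explicitly from the gap identity and obtain $\lambda^Ts\geq 0$ directly from $\lambda\in\cK^*$, $s\in\cK$, whereas the paper cites weak duality and gets nonnegativity implicitly from its chain of inequalities.
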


The proof of this proposition is provided in \S\ref{sec:low_cert_proof} of the appendix.
This result shows that the actual complementarity gap $\lambda^T s$ is an a posteriori certificate of lower-level suboptimality, which may be sharper than the prescribed tolerance $\epsilon$. Thus, every feasible point of (\ref{prob:approx_single_level}) recovers a lower-feasible point that is at most $\epsilon$-suboptimal, and driving $\epsilon$ to zero enforces exact lower-level optimality in objective value.
Note that although proposition~\ref{prop:lower_certificate} controls lower-level objective suboptimality, it does not by itself guarantee either proximity of $\rho(u)$ to $S(x)$ or optimality with respect to the upper problem; strong convexity provides the former guarantee.

\begin{corollary}\label{cor:strongly_convex}
    \emph{Strongly convex lower problem.}
    Fix a feasible $x$ and suppose that $f_0(x, \cdot)$ is $\mu$-strongly convex on the feasible set of the lower problem, where $\mu > 0$.
    Then $S(x) = \{y^\star(x)\}$ is a singleton set, and every feasible point of (\ref{prob:approx_single_level}) satisfies
    \begin{equation}\label{eq:lower_distance_bound}
        \norm{\rho(u) - y^\star(x)}_2 \leq \sqrt{\frac{2\epsilon}{\mu}},
    \end{equation}
    where $y^\star(x)$ is the unique lower minimizer for the given parameter $x$.
\end{corollary}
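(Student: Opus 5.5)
The plan is to combine Proposition~\ref{prop:lower_certificate} with the quadratic growth property that strong convexity gives around a constrained minimizer. First, the feasible set $\mathcal{Y}(x)=\{z \mid f_i(x,z)\leq 0,\ i=1,\ldots,p\}$ of the lower problem (\ref{prob:blv_lower}) is convex, because the lower problem is DPP-compliant and hence convex in $z$. By the standing assumptions in \S\ref{sec:asps}, the lower optimum is finite and attained, so $S(x)$ is nonempty. If $y_1,y_2\in S(x)$ were distinct, strong convexity would give $f_0(x,\tfrac12(y_1+y_2)) < p^\star(x)$. The midpoint is feasible by convexity, so this is a contradiction. Hence $S(x)=\{y^\star(x)\}$.

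Next comes the quadratic growth step. Since $y^\star=y^\star(x)$ minimizes $f_0(x,\cdot)$ over the convex set $\mathcal{Y}(x)$, I would show directly that $f_0(x,z)-f_0(x,y^\star)\ge \tfrac{\mu}{2}\norm{z-y^\star}_2^2$ for all $z\in\mathcal{Y}(x)$. This avoids needing a subgradient optimality condition with a normal cone, since $f_0$ may be nonsmooth (e.g., the $\ell_1$ norm in the example). Take $\theta\in(0,1)$ and the feasible point $z_\theta=\theta z+(1-\theta)y^\star$. The definition of $\mu$-strong convexity gives
\[
p^\star(x)\le f_0(x,z_\theta)\le \theta f_0(x,z)+(1-\theta)p^\star(x)-\tfrac{\mu}{2}\theta(1-\theta)\norm{z-y^\star}_2^2 .
\]
Subtracting $p^\star(x)$, dividing by $\theta$ and letting $\theta\to0$ yields the growth inequality. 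This is the step needing most care. The subtlety is that strong convexity is assumed only on the feasible set, but the argument uses only points of $\mathcal{Y}(x)$, so it suffices.

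Finally, let $(x,u,s,\lambda)$ be feasible for (\ref{prob:approx_single_level}). By the lossless assumption in \S\ref{sec:asps}, $\rho(u)\in\mathcal{Y}(x)$. Proposition~\ref{prop:lower_certificate} gives $f_0(x,\rho(u))-p^\star(x)\le\epsilon$. Applying the growth inequality with $z=\rho(u)$ gives $\tfrac{\mu}{2}\norm{\rho(u)-y^\star(x)}_2^2\le\epsilon$. Rearranging and taking square roots gives (\ref{eq:lower_distance_bound}). The same argument also yields the sharper bound with $\lambda^T s$ in place of $\epsilon$.
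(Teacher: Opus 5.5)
Your proposal is correct and follows essentially the same route as the paper's proof: uniqueness via the midpoint argument, quadratic growth from the convex combination $y_\theta$ with $\theta \to 0$, and then the bound from Proposition~\ref{prop:lower_certificate}. Your explicit remark that the attainment assumption gives $S(x)\neq\emptyset$, and your sharper bound with $\lambda^T s$ in place of $\epsilon$, are small but valid additions.
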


The proof of this corollary is provided in \S\ref{sec:strongly_convex_proof} of the appendix.
Under the strong-convexity condition of corollary~\ref{cor:strongly_convex}, objective accuracy also controls solution accuracy, with distance decreasing at the rate $O(\sqrt{\epsilon})$.
In particular, to guarantee $\norm{\rho(u)-y^\star(x)}_2\leq\delta$, it suffices to choose $\epsilon\leq\mu\delta^2/2$.
These observations motivate the gap continuation procedure developed below.

\subsection{Gap continuation}\label{sec:gap_continuation}
We consider the following algorithm for solving the bilevel problem (\ref{prob:blv}) via a sequence of approximate problems (\ref{prob:approx_single_level}) with decreasing relaxation parameters $\epsilon \to 0$.
\begin{algorithm}{Gap continuation.}\label{alg:gap_cont}
    \item \textbf{given} initial point $(x^{(0)},u^{(0)},s^{(0)},\lambda^{(0)})$, initial relaxation parameter $\epsilon^{(0)}>0$, and reduction factor $\beta\in(0,1)$.
    \item $k \coloneqq 0$.
    \item \textbf{repeat}
        \begin{enumerate}
            \item \emph{Solution.} Approximately solve the relaxation (\ref{prob:approx_single_level}) with $\epsilon=\epsilon^{(k)}$ using $(x^{(k)},u^{(k)},s^{(k)},\lambda^{(k)})$ as the initial point, and denote the solution by $(x^{(k+1)},u^{(k+1)},s^{(k+1)},\lambda^{(k+1)})$.
            \item \emph{Gap reduction.} $\epsilon^{(k+1)} \coloneqq \beta \epsilon^{(k)}$.
            \item $k \coloneqq k + 1$.
        \end{enumerate}
    \item \textbf{until} $\epsilon^{(k)}$ is sufficiently small.
\end{algorithm}
Note that the algorithm uses the computed solution of each relaxation to initialize the next one.
Because consecutive relaxations differ only in the complementarity tolerance, gradually reducing $\epsilon$ often provides an effective warm start in practice.
Moreover, the initial point need not be feasible unless required by the selected nonlinear solver, although a feasible and well-scaled initial guess may improve numerical robustness.
Implementation details of algorithm~\ref{alg:gap_cont} regarding the initialization and other numerical aspects are discussed in \S\ref{sec:impl}.

For completeness, \S\ref{sec:gap_continuation_consistency} of the appendix establishes an idealized consistency result for globally solved relaxations.
This result does not constitute a convergence guarantee for algorithm~\ref{alg:gap_cont}, whose nonlinear subproblems are generally solved only locally and approximately.
More refined convergence analyses of complementarity regularization methods under additional constraint qualifications and second-order conditions can be found in, \eg, Scholtes~\cite{scholtes2001regularization} and Ralph and Wright~\cite{ralph2004regularization}.

\section{Implementation}\label{sec:impl}
We implement the proposed DBLP framework in the Python package BLVPY serving as a CVXPY~\cite{diamond2016cvxpy,agrawal2018rewriting,zhang2026cvxpy} extension for bilevel optimization, which is fully open-source and available at
\begin{quote}
    \url{https://github.com/dxogrp/blvpy}.
\end{quote}

\subsection{The BLVPY pipeline}
The solution pipeline of BLVPY can be roughly divided into four stages: audited canonicalization of the lower problem, construction of an initial lifted point, gap continuation through a DNLP solver, and independent verification of the returned numerical point.
The current release supports DBLP-compliant optimistic bilevel problems whose lower problems admit lossless canonical conic representations over zero, nonnegative, second-order, exponential, and 3-dimensional power cones.

\paragraph{Audited atoms and canonicalization.}
BLVPY replaces each linked upper CVXPY variable by a CVXPY parameter in a copy of the lower problem and verifies that the resulting problem satisfies the DCP and DPP rules.
DCP and DPP compliance alone, however, does not guarantee that CVXPY canonicalizes every modeled function without approximation or uses only cone blocks and reduction steps supported by BLVPY.
It therefore audits both the source expression tree and the CVXPY reduction chain before extracting the affine maps from the upper variables to $A(x)$, $b(x)$, $c(x)$, and $d(x)$, as well as the affine recovery map $\rho$.
Table~\ref{tab:supported_lower_atoms} lists the lower-level atoms audited by the current BLVPY release.
\begin{table}
    \centering
    \footnotesize
    \setlength{\tabcolsep}{3pt}
    \renewcommand{\arraystretch}{1.5}
    \caption{
        Lower-level atoms audited by the current BLVPY release.
        The last column summarizes the representation produced during canonicalization.
        LP denotes a linear program representation (\ie, involving only zero and nonnegative cones).
        SOC, EXP, and P3D denote second-order, exponential, and 3-dimensional power cones, respectively.
        We denote the $i$th-largest entry of a vector $z$ by $z_{[i]}$.
        }\label{tab:supported_lower_atoms}
    \begin{tabular}{@{}p{0.34\textwidth}p{0.42\textwidth}p{0.18\textwidth}@{}}
        \toprule
        \sffamily\bfseries Atom & \sffamily\bfseries Definition & \sffamily\bfseries Representation \\
        \midrule
        \multicolumn{3}{@{}l}{\itshape Affine expressions} \\
        \texttt{affine expression} & $\phi(z) = M z + q$ & Affine \\
        \midrule
        \multicolumn{3}{@{}l}{\itshape Elementwise atoms} \\
        \texttt{abs(z)} & ${\phi(z)}_i=|z_i|$ & LP \\
        \texttt{entr(z)} & ${\phi(z)}_i=-z_i\log z_i$ & EXP \\
        \texttt{exp(z)} & ${\phi(z)}_i=e^{z_i}$ & EXP \\
        \texttt{huber(z,M)} & ${\phi(z)}_i=\left\{
            \begin{array}{ll}
                z_i^2,&|z_i|\leq M\\
                2M|z_i|-M^2,&|z_i|>M
            \end{array}\right.$ & SOC \\[10pt]
        \texttt{kl\_div(z,v)} & ${\phi(z,v)}_i=z_i\log(z_i/v_i)-z_i+v_i$ & EXP \\
        \texttt{log(z)} & ${\phi(z)}_i=\log z_i$ & EXP \\
        \texttt{log1p(z)} & ${\phi(z)}_i=\log(1+z_i)$ & EXP \\
        \texttt{logistic(z)} & ${\phi(z)}_i=\log(1+e^{z_i})$ & EXP \\
        \texttt{maximum(z,v)} & ${\phi(z,v)}_i=\max\{z_i,v_i\}$ & LP \\
        \texttt{minimum(z,v)} & ${\phi(z,v)}_i=\min\{z_i,v_i\}$ & LP \\
        \texttt{power(z,p)} & ${\phi(z)}_i=z_i^p$ & Affine/SOC/P3D \\
        \texttt{rel\_entr(z,v)} & ${\phi(z,v)}_i=z_i\log(z_i/v_i)$ & EXP \\
        \texttt{xexp(z)} & ${\phi(z)}_i=z_i e^{z_i}$ & EXP/SOC \\
        \midrule
        \multicolumn{3}{@{}l}{\itshape Vector atoms} \\
        \texttt{cummax(z)} & ${\phi(z)}_i=\max\{z_1,\ldots,z_i\}$ & LP \\
        \texttt{dotsort(z,W)} & $\phi(z,W)=\sum_i z_{[i]}W_{[i]}$ & LP \\
        \texttt{geo\_mean(z,p)} & $\phi(z)=\prod_i z_i^{w_i}$, $w_i=p_i/\sum_j p_j$ & SOC \\
        \texttt{log\_sum\_exp(z)} & $\phi(z)=\log\sum_i e^{z_i}$ & EXP \\
        \texttt{max(z)} & $\phi(z) = \max_i z_i$ & LP \\
        \texttt{min(z)} & $\phi(z) = \min_i z_i$ & LP \\
        \texttt{norm1(z)} & $\phi(z)=\sum_i |z_i|$ & LP \\
        \texttt{norm\_inf(z)} & $\phi(z)=\max_i |z_i|$ & LP \\
        \texttt{pnorm(z,p)} & $\phi(z) = \norm{z}_p$ & LP/SOC/P3D \\
        \texttt{sum\_largest(z,k)} & $\phi(z)=\sum_{i=1}^k z_{[i]}$ & LP \\
        \midrule
        \multicolumn{3}{@{}l}{\itshape Quadratic atoms} \\
        \texttt{quad\_form(z,P)} & $\phi(z) = z^T P z$ & SOC \\
        \texttt{quad\_over\_lin(z,t)} & $\phi(z) = \norm{z}_2^2/t$ & SOC \\
        \bottomrule
    \end{tabular}
\end{table}

\paragraph{Initialization.}
BLVPY preserves a user-specified value for each upper variable.
Otherwise, letting $L$ and $U$ denote the possibly infinite lower and upper bounds on, say, the scalar variable $x$, respectively, it sets
\begin{equation*}
    x^{(0)} = \left\{
    \begin{array}{ll}
        (L+U)/2, & L, U \in \reals\\
        L + 1, & L \in \reals,\quad U = \infty\\
        U - 1, & L = -\infty,\quad U \in \reals\\
        0, & L = -\infty,\quad U = \infty.
    \end{array}\right.
\end{equation*}
Given this initialization, BLVPY first enforces the CVXPY attributes of the upper variables.
If the upper constraints, together with any domain restrictions inherited from the parameterized lower problem, form a DCP set, BLVPY then attempts a least distance projection onto this set.
After fixing the upper point, BLVPY solves the canonical lower conic problem to initialize $u$, $s$, and $\lambda$, and applies the solution recovery map $\rho$ to initialize the original lower variables.
If the resulting point fails the independent residual check, an optional restoration problem minimizes a common nonnegative radius that relaxes the lifted constraints and the complementarity gap constraint.
The point is used for continuation only if the recomputed violations satisfy the prescribed feasibility tolerance.
This initialization strategy turns out to work quite well in practice, but it is not guaranteed to succeed for all problems.
When this happens, the user may need to provide a feasible initial point based on problem specific knowledge.

\paragraph{Continuation and local search.}
BLVPY assembles the lifted DNLP problem once and updates only the parameter $\epsilon$, using every accepted point to warm-start the next nonlinear solve.
The default schedule starts at $\epsilon^{(0)}=10^{-1}$, contracts it by $0.1$, and terminates at $10^{-6}$; the default feasibility tolerance is $10^{-7}$.
When a scheduled reduction from the last accepted value $\epsilon$ to a target value $\tilde{\epsilon}$ fails, BLVPY inserts the intermediate value
\begin{equation*}
    \epsilon_{\rm mid}=\sqrt{\epsilon \tilde{\epsilon}}
\end{equation*}
and retries, with at most eight insertions between successive scheduled targets by default.
BLVPY also allows a multistart procedure as a heuristic for finding better local solutions.
In this setup, BLVPY runs $N$ independent continuation paths from eligible randomized upper initializations and returns, among the points accepted at the prescribed final tolerance, the one with the smallest upper objective.

\paragraph{Residual verification and output.}
BLVPY does not accept a point solely on the basis of the status reported by the nonlinear solver.
After every nonlinear solve, it independently recomputes the primal- and dual-equality residuals, recovery-map residual, upper-constraint violation, distances to the primal and dual cones, complementarity product, and relaxed-gap violation.
An attempted point is accepted only if the solver reports a solution and the recomputed violations satisfy the requested tolerance.
The returned result contains immutable snapshots of the source and canonical variables, the accepted and attempted $\epsilon$ histories, objectives, solver statuses, residuals, and all multistart runs.
An optional diagnostic solves the lower-level conic problem once more with the upper variable fixed at its returned value and reports the difference between the returned lower objective and the resulting optimal value.
IPOPT~\cite{wachter2006implementation} is the tested default DNLP backend, while Clarabel~\cite{goulart2024clarabel} is the default conic backend for initialization, projection, and diagnostics.

\subsection{A simple code snippet}
As a basic coding example, the following snippet shows how to specify and solve the bilevel problem defined in (\ref{prob:example_bilevel}) and (\ref{prob:example_lower}) using BLVPY\@.
More complex practical examples are presented later in \S\ref{sec:example}.
\begin{lstlisting}[language=zhpython]
import cvxpy as cp
import blvpy as bp  # import the BLVPY package

# define problem variables
x = cp.Variable(n)
y = cp.Variable(n)

# specify the lower problem
lower = bp.LowerProblem(
    cp.Minimize(cp.norm1(y - x)),  # lower objective
    [y >= 0],  # lower constraints
    parameters=[x],  # lower parameters
)

# specify the upper problem
obj = cp.Minimize(
    cp.sum_squares(x - g)
    + gamma * cp.sum_squares(y - h)
)  # upper objective
constr = [-r <= x, x <= r]  # upper constraints
prob = bp.BilevelProblem(obj, lower, constr)

# solve the bilevel problem
result = prob.solve()
\end{lstlisting}
Note that the \verb+BilevelProblem+ and \verb+LowerProblem+ classes share the same CVXPY variables \verb+x+ and \verb+y+, while the argument \verb+parameters=[x]+ tells BLVPY to treat the upper variable $x$ as fixed data in the lower problem.
The call to \verb+prob.solve()+ performs validation, canonicalization, and gap continuation.
The default solve uses a single deterministic initialization.
To request $N$ independent continuation runs from randomized upper-level initializations, the user can set, for example, \verb+x.sample_bounds = (-r, r)+ and pass \verb+best_of=N+ to the \verb+solve()+ method.

\section{Examples}\label{sec:example}
In this section, we present several examples of bilevel optimization problems from different application domains.
These examples are simplified and are by no means exhaustive.
Our goal here is to illustrate the basic modeling capabilities of the proposed DBLP framework and the BLVPY package.
Interested readers may refer to
\begin{quote}
    \url{https://github.com/dxogrp/blvpy/tree/main/examples}
\end{quote}
for more examples not covered in this paper.
All presented example bilevel problems were solved with the target complementarity gap $\epsilon=10^{-9}$, and all returned final solutions passed the validation checks and diagnostics described in \S\ref{sec:impl}; see also the example repository for full code scripts, synthetic dataset setups, and detailed numerical results.

\subsection{Ridge regression auto-tuning}
We consider the problem of automatically tuning the regularization parameter in \emph{ridge regression}~\cite{hoerl1970ridge,barratt2021least}.
Let $X_{\rm tr} \in \reals^{m_{\rm tr} \times n}$ and $y_{\rm tr} \in \reals^{m_{\rm tr}}$ denote the training data, and let $X_{\rm val} \in \reals^{m_{\rm val} \times n}$ and $y_{\rm val} \in \reals^{m_{\rm val}}$ denote the validation data.
The hyperparameter tuning problem of ridge regression can be formulated as
\begin{equation}\label{prob:ridge}
    \begin{array}{ll}
        \mbox{minimize} & (1/m_{\rm val})\norm{X_{\rm val} w - y_{\rm val}}_2^2\\
        \mbox{subject to} & 10^{-4} \leq \lambda \leq 10\\
        & w \in S(\lambda),
    \end{array}
\end{equation}
where
\begin{equation}\label{prob:ridge_lower}
    \begin{array}{rl}
        S(\lambda) = \argmin_w & (1/m_{\rm tr})\norm{X_{\rm tr} w - y_{\rm tr}}_2^2 + \lambda \norm{w}_2^2.
    \end{array}
\end{equation}
The bilevel problem (\ref{prob:ridge}) has upper variable $\lambda \in \reals_+$ and lower variable $w \in \reals^n$, which correspond to the regularization parameter and the regression coefficients, respectively.
We can interpret the lower problem (\ref{prob:ridge_lower}) as a ridge regression problem with a fixed regularization parameter $\lambda$, and the upper problem (\ref{prob:ridge}) as a (constrained) least squares problem that tunes $\lambda$ to minimize the validation error.

To specify the bilevel problem (\ref{prob:ridge}) in BLVPY, we can use the following code snippet:
\begin{lstlisting}[language=zhpython]
# problem variables
lbd = cp.Variable(nonneg=True)
w = cp.Variable(n)

# lower problem
tr_loss = cp.sum_squares(X_tr @ w - y_tr) / m_tr
lower = bp.LowerProblem(
    cp.Minimize(tr_loss + lbd * cp.sum_squares(w)),
    parameters=[lbd],
)

# upper problem
val_loss = cp.sum_squares(X_val @ w - y_val) / m_val
prob = bp.BilevelProblem(
    cp.Minimize(val_loss),
    lower,
    [lbd >= 1e-4, lbd <= 10],
)
\end{lstlisting}

We solve the problem (\ref{prob:ridge}) with BLVPY on a randomly generated dataset.
Figure~\ref{fig:ridge} shows the training and validation losses as functions of the regularization parameter $\lambda$.
\begin{figure}
    \centering
    \includegraphics[width=0.85\textwidth]{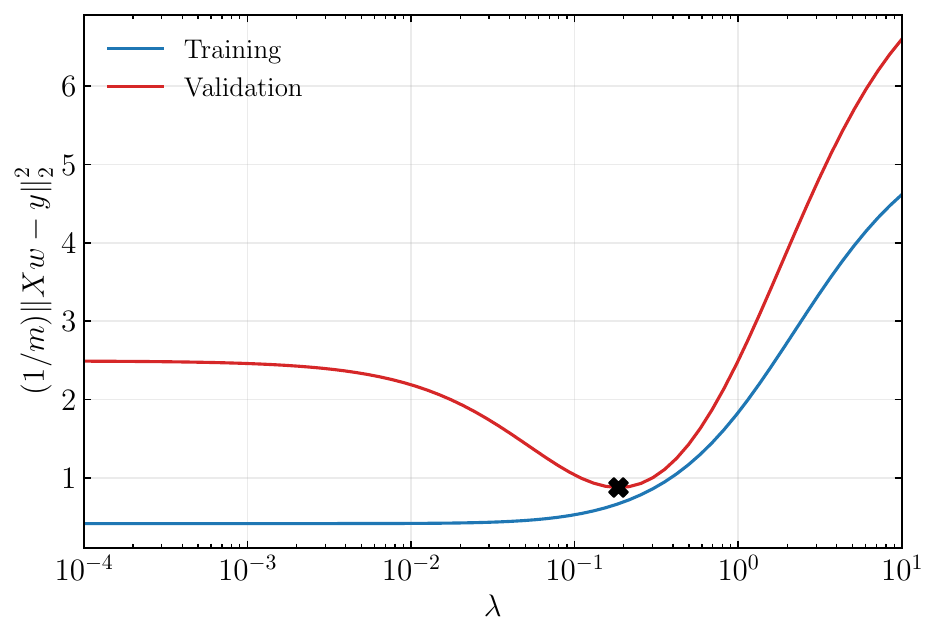}
    \caption{
        \emph{Ridge regression auto-tuning.}
        Training and validation losses as functions of the regularization parameter $\lambda$.
        The black cross indicates the final solution of the problem (\ref{prob:ridge}) returned by BLVPY\@.
        Note that the subscripts are removed from the axes labels for simplicity.
    }\label{fig:ridge}
\end{figure}
The figure shows that the final solution returned by BLVPY (shown as a black cross) corresponds to a regularization parameter that approximately achieves the minimum validation loss.

\subsection{Stackelberg security game}
We consider a Stackelberg security game in which a defender allocates limited security resources to protect a set of targets against an attacker~\cite{shieh2012protect,guo2019inducibility}.
Specifically, suppose there are $n$ targets, and let $p \in \reals^n$ with $0 \preceq p \preceq \ones$ denote the probability that the defender protects each target, and let $q \in \reals^n$ with $0 \preceq q \preceq \ones$ denote the probability that the attacker attacks each target.
For a fixed defender allocation $p$, the attacker chooses a strategy $q$ to solve the lower problem
\begin{equation}\label{prob:stackelberg_lower}
    \begin{array}{rl}
        S(p) = \argmin_q & - \sum_{i = 1}^n (r_i - c_i p_i)q_i\\
        \mbox{subject to} & q \succeq 0,\quad \ones^T q = 1,
    \end{array}
\end{equation}
where $r, c \in \reals^n$ are given attacker reward and defender coverage effectiveness vectors, respectively, so that $r_i - c_i p_i$ is the attacker's payoff for the $i$th target after observing defender coverage.
The defender then adapts its protection strategy to solve the upper problem
\begin{equation}\label{prob:stackelberg}
    \begin{array}{ll}
        \mbox{minimize} & \sum_{i = 1}^n L_i(1 - p_i) q_i + \gamma \norm{p}_2^2\\
        \mbox{subject to} & 0 \preceq p \preceq \ones,\quad \ones^T p \leq B\\
        & q \in S(p),
    \end{array}
\end{equation}
where $p, q$ are the upper and lower variables, respectively, $L \in \reals^n$ is a given (uncovered) defender loss vector under attack, $\gamma > 0$ is a given penalty parameter, and $B > 0$ is a given budget for the defender.
Note that the lower program (\ref{prob:stackelberg_lower}) can have multiple optimal attack targets and the optimistic bilevel semantics considered here is consistent with the strong-Stackelberg convention, \ie, if $S(p)$ contains multiple attacker best responses, the defender may select the one that minimizes its upper-level objective in (\ref{prob:stackelberg}).

To specify the problem (\ref{prob:stackelberg}) in BLVPY, we can use the following code snippet:
\begin{lstlisting}[language=zhpython]
# problem variables
p = cp.Variable(n, nonneg=True)
q = cp.Variable(n, nonneg=True)

# lower problem
utility = r - cp.multiply(c, p)
lower = bp.LowerProblem(
    cp.Minimize(-utility @ q),
    [cp.sum(q) == 1],
    parameters=[p],
)

# upper problem
loss = cp.multiply(L, 1.0 - p) @ q
prob = bp.BilevelProblem(
    cp.Minimize(loss + gamma * cp.sum_squares(p)),
    lower,
    [p <= 1, cp.sum(p) <= B],
)
\end{lstlisting}

Figure~\ref{fig:stackelberg} shows the optimized defender allocation and the corresponding attacker response returned by BLVPY for a randomly generated instance of the problem (\ref{prob:stackelberg}).
\begin{figure}
    \centering
    \includegraphics[width=\textwidth]{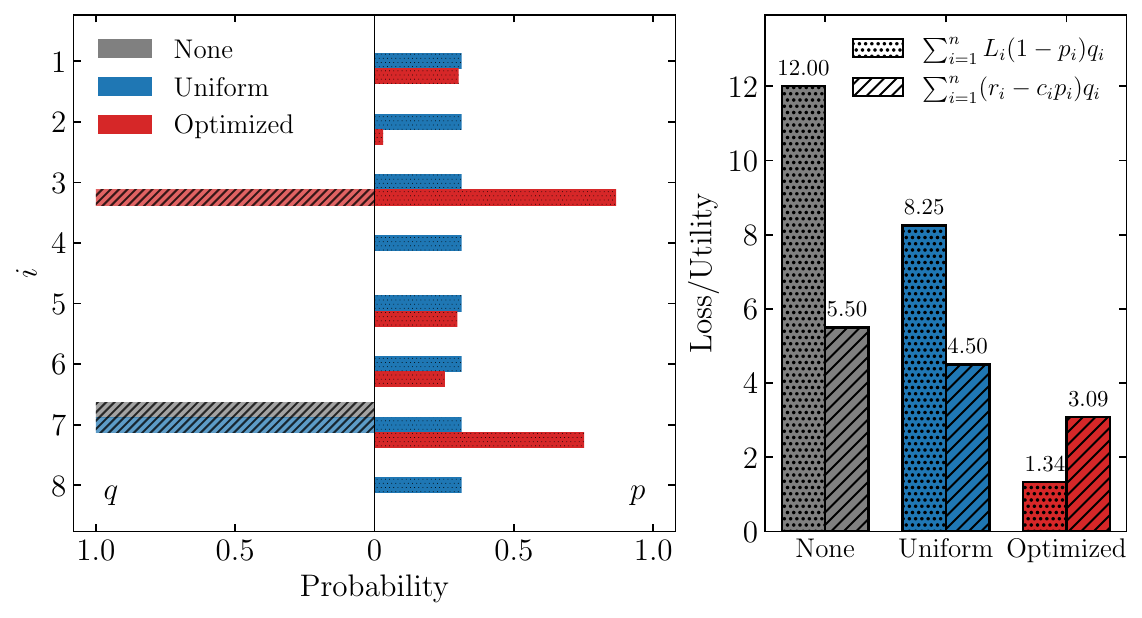}
    \caption{
        \emph{Stackelberg security game.}
        \emph{Left.}
        Defender protection $p$ and attacker response $q$ under none, uniform, and optimized protection.
        \emph{Right.}
        The corresponding expected defender loss and attacker utility.
        All numerical quantities are reported in normalized units.
    }\label{fig:stackelberg}
\end{figure}
For reference, we also plot the allocations with no and uniform defender coverage and the corresponding attacker responses.
The optimized defender allocation returned by BLVPY reduces the expected defender loss by approximately $80\%$ compared with uniform coverage and by approximately $90\%$ compared with no coverage.

\subsection{Renewable capacity planning}
We consider the problem of renewable capacity planning in a power system~\cite{zeng2017bi}, where a generation planner must choose renewable capacity before an electricity dispatcher sees the resulting operating limits and optimizes the dispatch accordingly.
Let $\alpha \in \reals_+$ denote the renewable capacity, and $L \in \reals^m_+$ denote the renewable generation availability at $m$ different time periods, so the vector $\alpha L \in \reals^m_+$ represents the renewable generation limits at all time periods.
Let $D \in \reals^m_+$ denote the electricity demand at all time periods, and $r, g \in \reals^m_+$ denote the renewable and thermal generation dispatch at all time periods.
A generation planner may formulate the renewable capacity planning problem as
\begin{equation}\label{prob:grid}
    \begin{array}{ll}
        \mbox{minimize} & p_r \alpha^2 + p_g \ones^T g\\
        \mbox{subject to} & 0 \leq \alpha \leq \alpha_{\rm max}\\
        & (r,g) \in S(\alpha),
    \end{array}
\end{equation}
where a grid dispatcher solves the lower problem
\begin{equation*}
    \begin{array}{rl}
        S(\alpha) = \argmin_{r,g} & c_r \ones^T r + c_g \ones^T g + \delta(\norm{r}_2^2 + \norm{g}_2^2)\\
        \mbox{subject to} & 0 \preceq r \preceq \alpha L,\quad g \succeq 0\\
        & r + g \succeq D.
    \end{array}
\end{equation*}
The problem (\ref{prob:grid}) has upper variable $\alpha$ and lower variables $r, g$.
The planning and dispatching cost coefficients $p_r, p_g, c_r, c_g$, the maximum renewable capacity $\alpha_{\rm max}$, the regularization parameter $\delta > 0$, as well as the vectors $L, D$ are assumed to be given.

To specify the bilevel problem (\ref{prob:grid}) in BLVPY, we can use the following code snippet:
\begin{lstlisting}[language=zhpython]
# problem variables
alpha = cp.Variable()
r = cp.Variable(m)
g = cp.Variable(m)

# lower problem
lower = bp.LowerProblem(
    cp.Minimize(
        c_r * cp.sum(r) + c_g * cp.sum(g) 
        + delta * (cp.sum_squares(r) + cp.sum_squares(g))),
    [
        r >= 0.0,
        r <= cp.multiply(L, alpha),
        g >= 0.0,
        r + g >= D,
    ],
    parameters=[alpha],
)

# upper problem
prob = bp.BilevelProblem(
    cp.Minimize(p_r * cp.square(alpha) + p_g * cp.sum(g)),
    lower,
    [alpha >= 0.0, alpha <= alpha_max],
)
\end{lstlisting}

We solve the problem (\ref{prob:grid}) with BLVPY on a synthetic dataset, and the final solution returned by BLVPY is shown in figure~\ref{fig:grid}.
\begin{figure}
    \centering
    \includegraphics[width=0.85\textwidth]{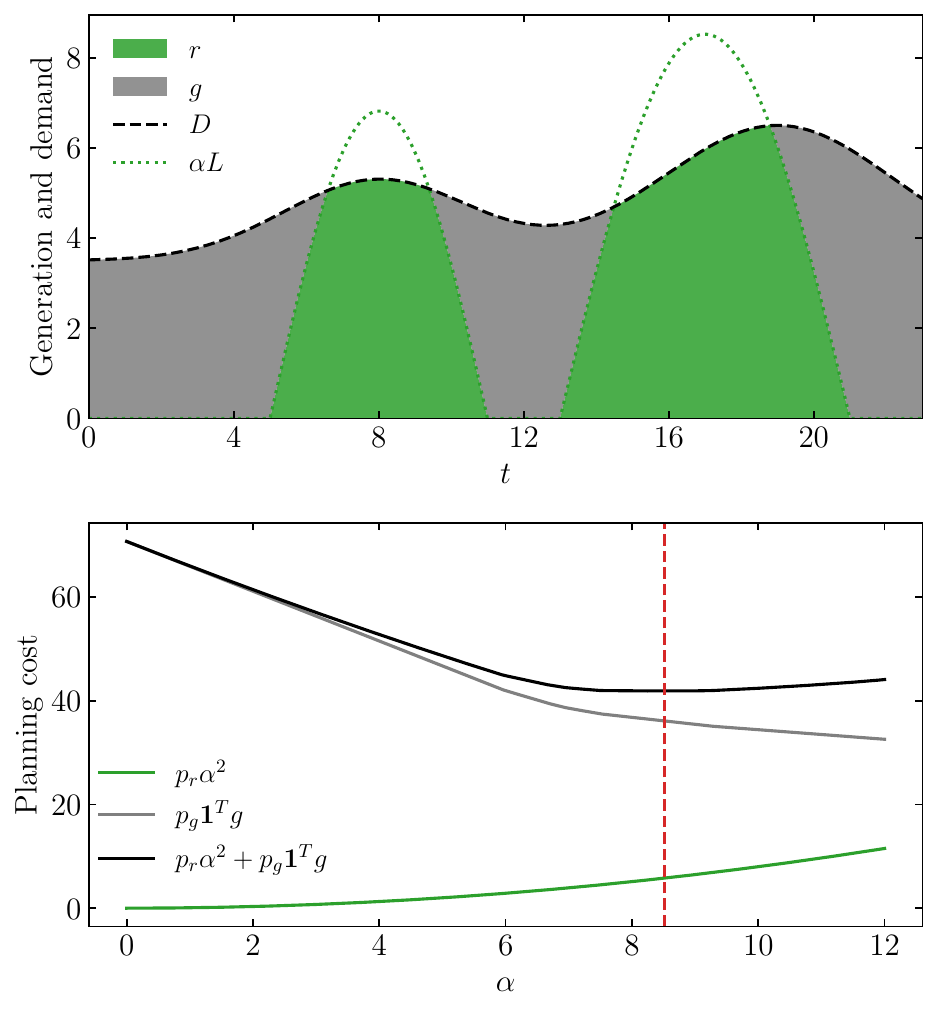}
    \caption{
        \emph{Renewable capacity planning.}
        \emph{Top.}
        Optimal renewable $r$ and thermal $g$ dispatch under demand $D$ and the optimal renewable limit $\alpha L$ obtained from the BLVPY solution.
        \emph{Bottom.}
        Investment and thermal generation costs versus capacity, with the dashed line marking the final solution of the problem (\ref{prob:grid}) returned by BLVPY\@.
    }\label{fig:grid}
\end{figure}
The lower plot indicates that the BLVPY solution (shown as a red dashed line) approximately minimizes the total renewable and thermal planning cost, while the upper plot shows the corresponding optimal dispatch under the given demand and renewable limit.

\subsection{Traffic tolling}
We consider second-best congestion pricing on a Braess network~\cite{braess2005paradox,verhoef2002second}.
Specifically, consider a network with four nodes $O, A, B, D$, in which travelers move from the origin $O$ to the destination $D$.
As illustrated in figure~\ref{fig:network}, the ordered edge and path lists are
\begin{equation*}
    E = (OA, OB, AD, BD, AB)\quad\mbox{and}\quad P = (OAD, OBD, OABD),
\end{equation*}
respectively.
The authority may impose a nonnegative toll $\tau \in \reals_+$ on the central edge $AB$, which is used only by the third path.
\begin{figure}
    \centering
    \includegraphics[width=0.45\textwidth]{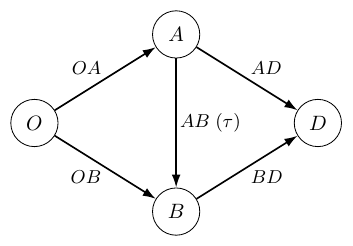}
    \caption{
        \emph{Braess network used in the traffic-tolling example.}
        The arrows indicate the permitted travel directions, and $\tau$ denotes the toll imposed on the central edge $AB$.
    }\label{fig:network}
\end{figure}
Let $x \in \reals^3$ collect the path flows in the order specified by $P$, and let
$f = (f_{OA}, f_{OB}, f_{AD}, f_{BD}, f_{AB}) \in \reals^5$ collect the edge flows in the order specified by $E$.
The path-edge incidence relation is given by
\begin{equation*}
    f = Hx,\qquad H = \left[\begin{array}{ccc}
        1 & 0 & 1\\
        0 & 1 & 0\\
        1 & 0 & 0\\
        0 & 1 & 1\\
        0 & 0 & 1
    \end{array}\right],
\end{equation*}
where $H_{ei} = 1$ if edge $e$ belongs to path $i$, and $H_{ei} = 0$ otherwise.
In particular, the last row gives $f_{AB} = x_3$.
For a total travel demand $q \in \reals_+$, feasible path flows satisfy $x \succeq 0$ and $\ones^T x = q$, which also implies $f = Hx \succeq 0$.

Let $e \in E$ denote an edge in the network, and let $f_e$ denote the flow on that edge.
We assume that the travel time on edge $e$ is an affine function of the flow $f_e$, given by
\begin{equation*}
    T_e(f_e) = b_e + a_e f_e,
\end{equation*}
where $a_e, b_e \geq 0$ are given edge-specific parameters.
For a toll $\tau$ imposed on the central edge $AB$, we assume that travelers choose their paths according to the Wardrop equilibrium~\cite{wardrop1952road}, which can be formulated as the following lower problem:
\begin{equation*}
    \begin{array}{rl}
        S(\tau) = \argmin_x & \sum_{e \in E} (b_e f_e + (1/2) a_e f_e^2) + \tau f_{AB}\\
        \mbox{subject to} & x \succeq 0,\quad \ones^T x = q\\
        & f = Hx,
    \end{array}
\end{equation*}
which is sometimes called the Beckmann potential formulation~\cite[\S3.1.2]{beckmann1956studies} of the Wardrop equilibrium.
The authority's objective is to minimize the total travel time of all travelers plus a penalty term for the toll, which can be expressed as the upper problem
\begin{equation}\label{prob:traffic}
    \begin{array}{ll}
        \mbox{minimize} & \sum_{e \in E} f_e T_e(f_e) + \rho \tau^2\\
        \mbox{subject to} & 0 \leq \tau \leq \tau_{\rm max}\\
        & x \in S(\tau),
    \end{array}
\end{equation}
where $x$ is the lower variable, $\tau$ is the upper variable, and $\rho > 0$ is a given penalty parameter.

To specify the problem (\ref{prob:traffic}) in BLVPY, we can use the following code snippet:
\begin{lstlisting}[language=zhpython]
# problem variables
tau = cp.Variable(1)
x = cp.Variable(3)
f = H @ x

# lower problem
lower = bp.LowerProblem(
    cp.Minimize(0.5 * a @ cp.square(f) + b @ f + tau[0] * x[2]),
    [x >= 0, cp.sum(x) == q],
    parameters=[tau],
)

# upper problem
time = b @ f + a @ cp.square(f)
prob = bp.BilevelProblem(
    cp.Minimize(time + rho * cp.sum_squares(tau)),
    lower,
    [tau >= 0, tau <= tau_max],
)
\end{lstlisting}

We solve the problem (\ref{prob:traffic}) with BLVPY on a synthetic dataset.
The two network diagrams on the left of figure~\ref{fig:traffic} compare the edge flows under the optimal toll $\tau^\star = 2.03$ returned by BLVPY for the problem (\ref{prob:traffic}), with those under no toll (\ie, $\tau = 0$).
\begin{figure}
    \centering
    \includegraphics[width=\textwidth]{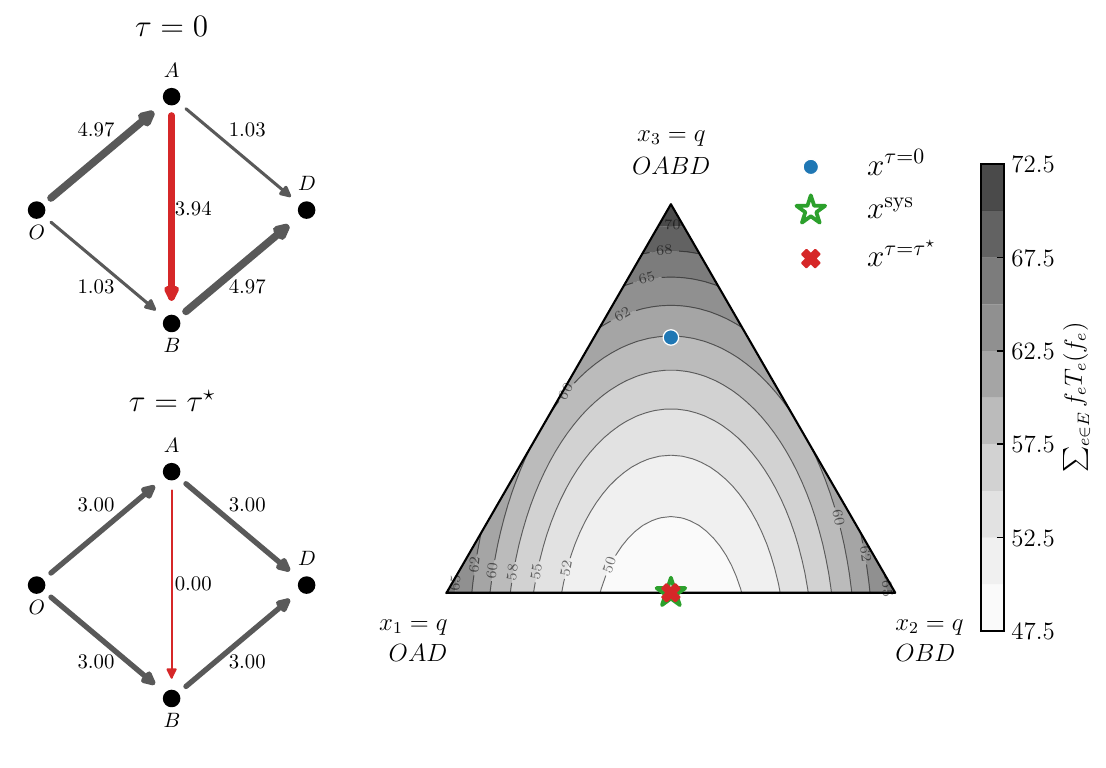}
    \caption{
        \emph{Traffic tolling on a Braess network.}
        \emph{Left.}
        Edge flows in untolled ($\tau = 0$) and optimally tolled ($\tau = \tau^\star$) Braess networks returned by BLVPY\@.
        \emph{Right.}
        Total travel time over feasible path flow allocations, where $x^{\rm sys}$ denotes the system optimal flow obtained by directly minimizing the total travel time assuming a central controller could assign routes to travelers.
        All numerical quantities are reported in normalized units.
    }\label{fig:traffic}
\end{figure}
With $\tau = 0$, approximately $3.94$ flow units use the central path $OABD$, which therefore raises network travel time to $59.89$ units.
The selected toll $\tau^\star = 2.03$ removes almost all flow from $AB$ and divides
demand between $OAD$ and $OBD$.
As a result, network travel time falls by approximately $20\%$ to $48.18$ units.
This is essentially the system optimal flow, which is achieved by directly minimizing the total travel time (\ie, the objective of the upper problem (\ref{prob:traffic})), as if a central controller could directly assign routes to travelers.
The right panel in figure~\ref{fig:traffic} shows total network travel time over every feasible path flow allocation satisfying $x_1 + x_2 + x_3 = q$.
For the untolled flow $x^{\tau = 0}$ (shown as a blue dot), the Wardrop equilibrium is not system optimal, while the optimal toll flow $x^{\tau = \tau^\star}$ returned by BLVPY (shown as a red cross) is very close to the system optimal flow $x^{\rm sys}$ (shown as a green star).

\subsection{Planar truss design}
We consider the problem of designing a planar truss structure (see, \eg, \cite[chapter 5]{christensen2009structural} and \cite[chapters 2 and 11]{terlaky2017advances}).
Let $a \in \reals^n_+$ and $L \in \reals^n_+$ denote the cross-sectional areas and lengths of the $n$ truss members, respectively, and let $E \in \reals$ denote the Young's modulus of the truss material.
Let $u \in \reals^m$ denote the nodal displacements for a given set of applied nodal forces $f \in \reals^m$, where $m$ represents the number of displacement degrees of freedom.
The designer's upper problem is to minimize the compliance $f^T u$ of the truss structure, subject to a budget constraint on the total material volume, \ie,
\begin{equation}\label{prob:truss}
    \begin{array}{ll}
        \mbox{minimize} & f^T u\\
        \mbox{subject to} & a_{\rm min} \ones \preceq a \preceq a_{\rm max} \ones\\
        & L^T a \leq V_{\rm max}\\
        & u \in S(a),
    \end{array}
\end{equation}
where $a_{\rm min}, a_{\rm max} > 0$ are given lower and upper bounds on the cross-sectional areas, and $V_{\rm max} > 0$ is a given maximum material volume.
We assume that the length vector $L \succ 0$ is given and fixed, so the problem (\ref{prob:truss}) has upper variable $a$ and lower variable $u$.
The lower problem is the linear elasticity problem that computes the nodal displacements $u$ for a given cross-sectional area allocation $a$, which can be expressed as
\begin{equation*}
    \begin{array}{rl}
        S(a) = \argmin_u & (1/2) u^T K(a) u - f^T u\\
        \mbox{subject to} & u_i = 0,\quad i \in \fset,
    \end{array}
\end{equation*}
where $\fset$ is the set of fixed displacement indices, and
\begin{equation*}
    K(a) = B^T \diag\left(\frac{E a_1}{L_1}, \ldots, \frac{E a_n}{L_n}\right) B
\end{equation*}
is the stiffness matrix of the truss structure, where $B \in \reals^{n \times m}$ is the compatibility matrix that maps nodal displacements to member elongations.

Suppose a structural designer needs to allocate material among the members of a two-bay cantilever truss, as shown in figure~\ref{fig:truss}.
\begin{figure}
    \centering
    \includegraphics[width=0.5\textwidth]{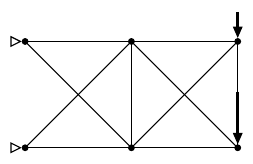}
    \caption{
        \emph{Two-bay cantilever truss.}
        The circles and solid lines represent the nodes and members of the truss, respectively.
        The arrows indicate the applied nodal forces, and the leftmost triangles indicate the supported nodes with fixed displacements.
    }\label{fig:truss}
\end{figure}
The truss has $6$ nodes and $n = 10$ members, and the number of displacement degrees of freedom is $m = 12$ (since each node has $2$ displacement degrees of freedom for the horizontal and vertical directions).
Both degrees of freedom at each of the two leftmost nodes are fixed, and a unit downward load is applied at the lower-right node and a smaller downward load at the upper-right node (shown by the arrows).
The following code snippet shows how to specify the corresponding bilevel problem (\ref{prob:truss}) in BLVPY for this example.
\begin{lstlisting}[language=zhpython]
# problem variables
a = cp.Variable(n, nonneg=True)
u = cp.Variable(m)

# lower problem
energy = 0.5 * cp.sum(cp.multiply(
    cp.multiply(E / L, a), cp.square(B @ u)
))
lower = bp.LowerProblem(
    cp.Minimize(energy - f @ u),
    [u[F] == 0],
    parameters=[a],
)

# upper problem
problem = bp.BilevelProblem(
    cp.Minimize(f @ u),
    lower,
    [L @ a <= V_max, a_min <= a, a <= a_max],
)
\end{lstlisting}

Figure~\ref{fig:truss_design} shows the optimized cross-sectional areas returned by BLVPY for the problem (\ref{prob:truss}) with synthetic numerical problem data.
\begin{figure}
    \centering
    \includegraphics[width=\textwidth]{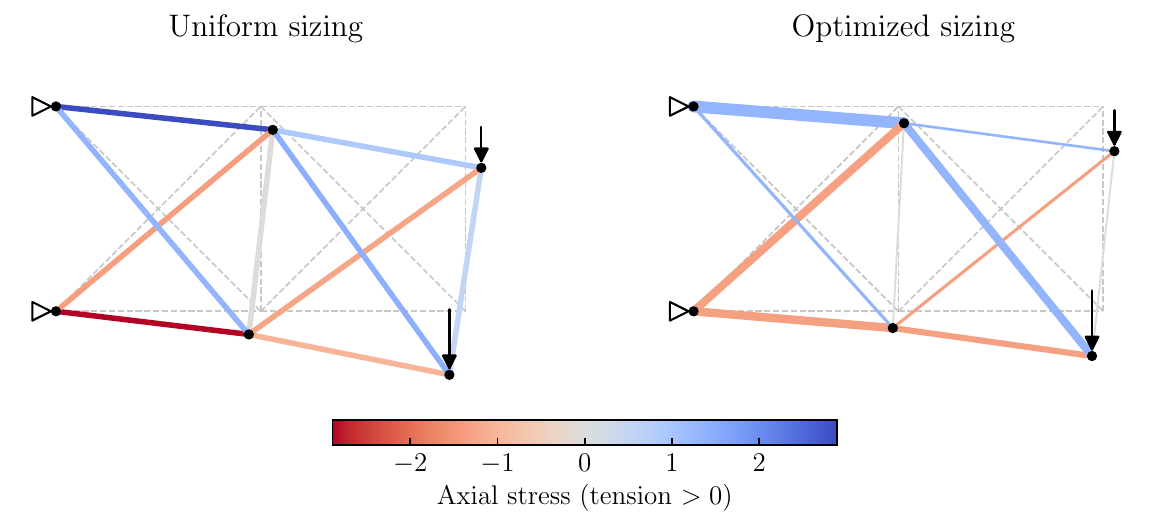}
    \caption{
        \emph{Planar truss design.}
        Uniform baseline (shown left) and optimized (shown right) cross-sectional area designs of the two-bay cantilever truss in figure~\ref{fig:truss}.
        The dashed lines indicate the undeformed geometry of the truss, while the solid lines indicate the deformed geometry under the applied loads.
        The color of each member indicates the magnitude of the axial stress in that member under equilibrium.
    }\label{fig:truss_design}
\end{figure}
A thicker line indicates a larger cross-sectional area, and the color of each member indicates the magnitude of the axial stress in that member under the optimized design.
We also show the uniform design with the same total material volume for comparison.
The figure shows that the optimized design returned by BLVPY allocates more material to members under higher stress.
The compliances of the uniform and optimized designs are $1.90$ and $1.35$ (normalized units), respectively, so the optimized design reduces compliance by approximately $30\%$ without using additional material.

\subsection{Differentiable model predictive control}
We consider the problem of learning model predictive control (MPC) regularization weights from expert demonstrations for a direct-current motor speed control task~\cite{amos2018differentiable,rawlings2026mpc}.
Specifically, the controller problem is a finite-horizon MPC problem that computes the optimal control actions for given regularization weights, and the learner problem is an imitation learning problem that tunes the regularization weights to minimize the discrepancy between the MPC output and expert demonstrations.
Conventionally, such a hierarchical MPC problem is solved by differentiating through the MPC controller problem (see, \eg, \cite{amos2018differentiable} and \cite{agrawal2021learning}).
Here we show that it can also be formulated as a bilevel problem and solved with BLVPY\@.

Let $x(t) = (\omega(t), i(t)) \in \reals^2$ denote the state of the motor at time $t = 0, 1, \ldots, N$, where $\omega(t)$ and $i(t)$ are the angular speed and current, respectively, and let $u(t) \in \reals$ denote the control input (voltage) at time $t = 0, 1, \ldots, N-1$.
We collect the complete state and control trajectories as $x = \left[\begin{array}{ccc}
    x(0) & \cdots & x(N)
\end{array}\right] \in \reals^{2 \times (N+1)}$ and $u = (u(0), \ldots, u(N-1)) \in \reals^N$, respectively, with the previous input $u(-1) = 0$ as fixed data.
The motor dynamics can be expressed as a linear discrete-time system
\begin{equation*}
    x(t+1) = A x(t) + B u(t),
\end{equation*}
where $A \in \reals^{2 \times 2}$ and $B \in \reals^{2 \times 1}$ are given system matrices.
We consider an MPC controller that solves the following lower problem for given regularization weights $\delta, \eta \in \reals_+$:
\begin{equation*}
    \begin{array}{rl}
        S(\delta, \eta) = \argmin_{x, u} & \sum_{t = 1}^{N} {(\omega(t) - 1)}^2 + 5 {(\omega(N) - 1)}^2 + 0.04 \sum_{t = 1}^{N} {(i(t) - 0.1)}^2\\[10pt]
        &\qquad + \delta \sum_{t = 0}^{N-1} {(u(t) - 0.2)}^2 + \eta \sum_{t = 0}^{N-1} {(u(t) - u(t-1))}^2\\[10pt]
        \mbox{subject to} & x(0) = (0, 0),\quad x(t+1) = A x(t) + B u(t),\quad t = 0, \ldots, N-1\\[5pt]
        & |u(t)| \leq 2,\quad t = 0, \ldots, N-1\\[5pt]
        & |i(t)| \leq 0.5,\quad -0.05 \leq \omega(t) \leq 1.25,\quad t = 0, \ldots, N.
    \end{array}
\end{equation*}
In this example, we learn the weights $\delta$ for the control magnitude and $\eta$ for the control rate of change from expert state and control trajectories $\hat{x}(t)$, $t = 0, \ldots, N$ and $\hat{u}(t)$, $t = 0, \ldots, N-1$ over the same horizon, while holding all other weights fixed.
The corresponding upper problem is given by
\begin{equation}\label{prob:mpc}
    \begin{array}{ll}
        \mbox{minimize} & \sum_{t = 0}^{N} \norm{x(t) - \hat{x}(t)}_2^2 + 0.1 \sum_{t = 0}^{N-1} \norm{u(t) - \hat{u}(t)}_2^2\\[5pt]
        \mbox{subject to} & 0.01 \leq \delta \leq 1,\quad 0.01 \leq \eta \leq 1\\
        & (x, u) \in S(\delta, \eta),
    \end{array}
\end{equation}
where $\delta, \eta$ are the upper variables and $x, u$ are the lower variables.
The following code snippet shows how to specify the bilevel problem (\ref{prob:mpc}) in BLVPY\@.
\begin{lstlisting}[language=zhpython]
# problem variables
delta = cp.Variable(nonneg=True)
eta = cp.Variable(nonneg=True)
x = cp.Variable((2, 25))
u = cp.Variable(24)

# lower problem
u_diff = cp.diff(cp.hstack([0, u]))
mpc_obj = (
    cp.sum_squares(x[0, 1:] - 1)
    + 5 * cp.square(x[0, -1] - 1)
    + 0.04 * cp.sum_squares(x[1, 1:] - 0.1)
    + delta * cp.sum_squares(u - 0.2)
    + eta * cp.sum_squares(u_diff)
)
x_next = A @ x[:, :-1] + cp.outer(B, u)
lower = bp.LowerProblem(
    cp.Minimize(mpc_obj),
    [
        x[:, 0] == np.zeros(2),
        x[:, 1:] == x_next,
        cp.abs(u) <= 2,
        cp.abs(x[1, :]) <= 0.5,
        x[0, :] >= -0.05,
        x[0, :] <= 1.25,
    ],
    parameters=[delta, eta],
)

# upper problem
loss = (
    cp.sum_squares(x - x_hat)
    + 0.1 * cp.sum_squares(u - u_hat)
)
problem = bp.BilevelProblem(
    cp.Minimize(loss),
    lower,
    [
        delta >= 0.01, delta <= 1.0,
        eta >= 0.01, eta <= 1.0,
    ]
)
\end{lstlisting}

\begin{figure}
    \centering
    \includegraphics[width=\textwidth]{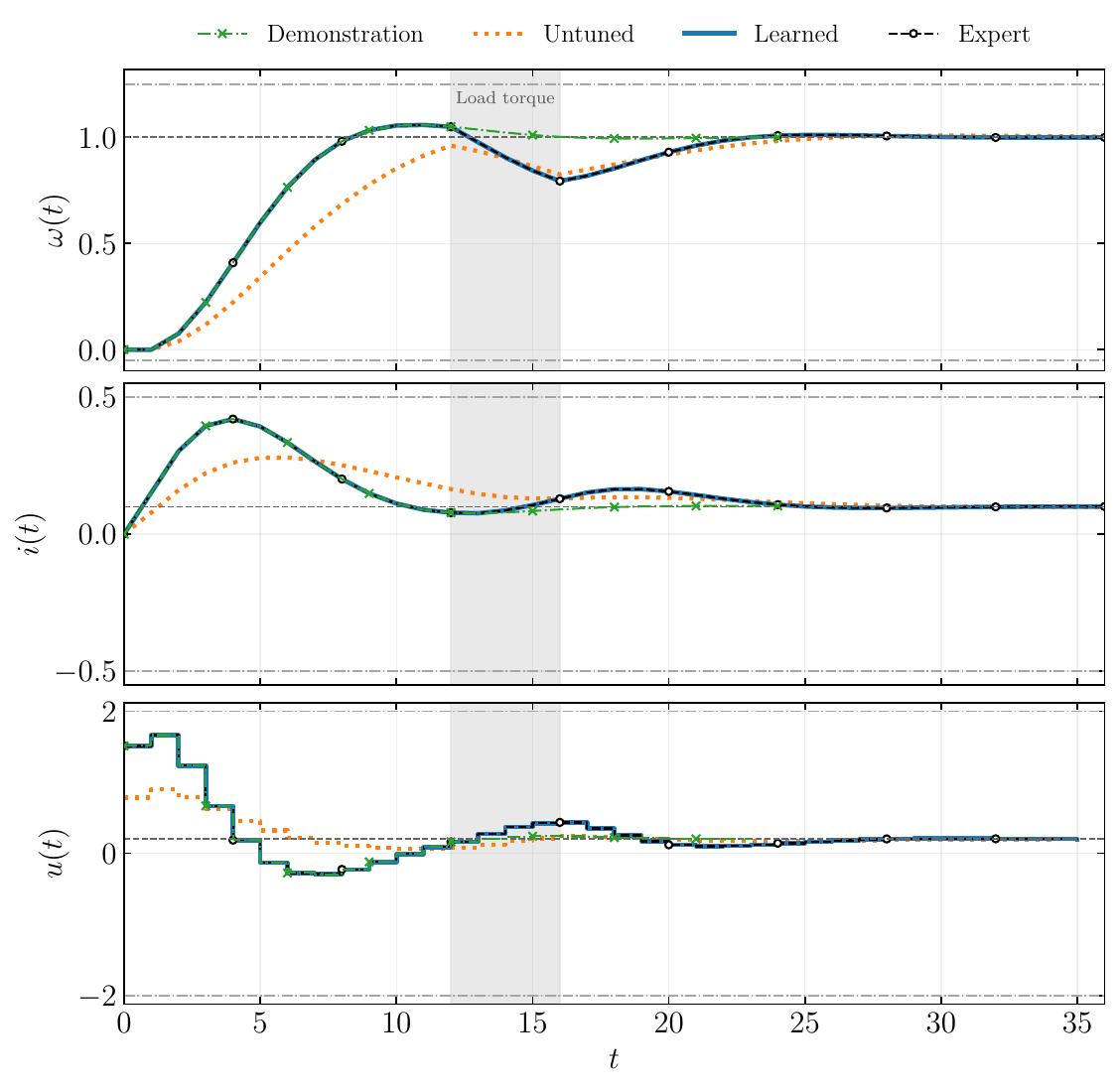}
    \caption{
        \emph{Differentiable model predictive control.}
        Validation trajectories for angular speed $\omega(t)$ (top), current $i(t)$ (middle), and control input $u(t)$ (bottom) under the untuned baseline controller with uniform weights $\delta = \eta = 0.5$ (shown in orange) and the learned controller with weights returned by BLVPY (shown in blue).
        The expert demonstration used for imitation learning and the expert validation trajectory are shown in green and black, respectively.
        The shaded region indicates the duration of an unannounced load torque pulse.
        The horizontal dashed and dashdotted lines plot the target values and corresponding safety limits, respectively.
    }\label{fig:mpc}
\end{figure}
We solve the problem (\ref{prob:mpc}) with BLVPY using synthetic expert demonstrations, then validate the learned weights in a 36-step receding horizon rollout with an unannounced load torque pulse.
We also compare the learned weights with an untuned baseline controller that uses uniform weights $\delta = \eta = 0.5$.
The resulting closed-loop trajectories are shown in figure~\ref{fig:mpc}.
The learned weights returned by BLVPY reproduce the expert behavior both in the nominal case and under the unannounced load torque pulse.
In particular, on the nominal demonstration trajectory, the imitation loss (\ie, the upper objective in problem (\ref{prob:mpc})) decreases from $0.90$ for the untuned baseline to essentially zero for the learned controller.

\section{Discussion}\label{sec:discussion}
\subsection{Conclusion}
In this paper, we introduce DBLP, a disciplined framework for modeling and solving bilevel optimization problems.
If a bilevel problem has a DNLP-compliant upper problem and a DPP-compliant convex lower problem, DBLP can then automatically constructs a single-level reformulation of the original problem via the conic KKT conditions and solves it using a gap continuation procedure.
Based on these ideas, we implement the open-source BLVPY package as an extension to CVXPY, which allows users to specify bilevel problems in a high-level, human-readable way, while automatically handling the low-level transformations and solver interfacing.

To illustrate the expressive scope of DBLP, we consider examples from several application domains.
In each case, the bilevel model can be specified compactly in a few lines of human-readable code close to the original mathematical formulation, while DBLP automatically constructs the single-level reformulation and interfaces it with the numerical solver.
By separating high-level model specification from these low-level transformations, DBLP is intended to make bilevel programming more accessible in practice.
We hope that this will facilitate its use in new application areas.

\subsection{Solution quality and guarantees}
We should emphasize that the DBLP framework and gap continuation procedure do \emph{not} guarantee global optimality of the computed solution, even when the upper-level objective and constraints are convex.
This is because the single-level reformulation (\ref{prob:single_level}) and its relaxation (\ref{prob:approx_single_level}) are generally nonconvex and are therefore solved only approximately by nonlinear solvers.
In the most general case, little can be said about the convergence properties of generic nonlinear numerical solvers; interested readers may refer to Cederberg~\etal~\cite[\S2.3]{cederberg2026disciplined} for a detailed discussion.

What DBLP \emph{does} provide is a lower-level certificate:
If the final point satisfies the constraints of (\ref{prob:approx_single_level}) for a tolerance $\epsilon$, proposition~\ref{prop:lower_certificate} ensures that the recovered lower point is feasible and at most $\epsilon$-suboptimal for the lower problem, \ie, roughly speaking, the returned point is approximately bilevel-feasible for the original problem (\ref{prob:blv}).
Our examples show that such a point is often a good (or at least acceptable) solution in practice.

\subsection{Lower-suboptimality and polishing}
It can happen in practice that for a given bilevel problem of the form (\ref{prob:blv}), the continuation procedure described in \S\ref{sec:gap_continuation} cannot reach a requested tolerance $\epsilon$.
If this point was accepted at a larger tolerance, say, $\tilde{\epsilon} > \epsilon$, proposition~\ref{prop:lower_certificate} still certifies lower-level feasibility and at most $\tilde{\epsilon}$ lower-level suboptimality, and the point may therefore be reported as an approximate solution of (\ref{prob:blv}) together with its actual complementarity gap and residuals.

On the other hand, as a post-processing step, one may fix the upper variable at its returned value and solve the associated convex lower problem (\ref{prob:blv_lower}) again to obtain a more accurate lower-level response.
Computationally, this polishing step is typically inexpensive and numerically reliable since it requires only a single convex solve.
It tends to work particularly well when the lower solution is unique and stable with respect to the upper variable and the upper constraints have sufficient slack, so that the change in the lower variable does not destroy upper-level feasibility.
We should note, however, that this replacement is not an automatic repair, since it can change the upper objective, violate upper constraints that depend on the lower variable, and, when the lower problem has multiple solutions, return a response that is not the one selected by the optimistic bilevel problem.
The polished upper-lower solution pair should therefore be reevaluated against all upper constraints and objectives; unless it passes these checks, the resolve should be treated as a diagnostic rather than as a certified solution of the original bilevel problem.
If the polished solution is indeed feasible and improves the upper objective, it may be accepted as a better solution of the original problem.

To support this post-solve polishing procedure, BLVPY provides a \texttt{polish} method on the \texttt{BilevelProblem} class that can be applied directly to any solution returned by \texttt{solve}, so that a user can perform this step directly in native BLVPY workflow without manually reconstructing and initializing the lower convex problem again in CVXPY.

\newpage
\appendix
\section{Proofs}
In this appendix, we provide the detailed proofs of the results stated in the main text.

\subsection{Proof of proposition~\ref*{prop:lower_certificate}}\label{sec:low_cert_proof}
\begin{proof}
Fix a feasible point $(x,u,s,\lambda)$ of (\ref{prob:approx_single_level}), and denote its canonical primal and dual objective values by
\begin{equation*}
    P = {c(x)}^T u + d(x)\quad\mbox{and}\quad
    D = -{b(x)}^T \lambda + d(x),
\end{equation*}
respectively.
Primal feasibility gives $b(x) = A(x)u + s$, while dual feasibility gives ${A(x)}^T \lambda = -c(x)$.
Substituting these identities into the primal-dual gap yields
\begin{equation*}
    \begin{array}{rcl}
    P-D
    &=& {c(x)}^T u + {b(x)}^T \lambda\\
    &=& {c(x)}^T u + {(A(x)u + s)}^T \lambda\\
    &=& {c(x)}^T u + u^T {A(x)}^T \lambda + s^T \lambda\\
    &=& s^T \lambda.
    \end{array}
\end{equation*}
Because $(u,s)$ is primal feasible and $\lambda$ is dual feasible, weak
duality gives
\begin{equation*}
    D \leq q^\star(x) \leq P,
\end{equation*}
where $q^\star(x)$ is the optimal value of the canonical lower problem (\ref{prob:lower_cone}) for the given parameter $x$.
It then follows that
\begin{equation*}
    0 \leq P - q^\star(x) \leq P - D = s^T \lambda.
\end{equation*}
According to the losslessness assumption in \S\ref{sec:asps}, we have
\begin{equation*}
    p^\star(x) = q^\star(x).
\end{equation*}
Moreover, the feasible point recovery property ensures that $\rho(u)$ is feasible for the original lower problem and that
\begin{equation*}
    f_0(x, \rho(u)) \leq P.
\end{equation*}
Lower-level feasibility also gives
\begin{equation*}
    f_0(x, \rho(u)) \geq p^\star(x).
\end{equation*}
Combining these inequalities, we have
\begin{equation*}
    0 \leq f_0(x, \rho(u)) - p^\star(x)
    \leq P - p^\star(x)
    \leq s^T \lambda.
\end{equation*}
Finally, feasibility of (\ref{prob:approx_single_level}) imposes $s^T \lambda \leq \epsilon$, which proves (\ref{eq:lower_certificate}).
\end{proof}

\subsection{Proof of corollary~\ref*{cor:strongly_convex}}\label{sec:strongly_convex_proof}
\begin{proof}
The feasible set of the lower problem is convex because the lower problem is DPP-compliant.
Strong convexity on this set implies that the lower minimizer is unique:
If two distinct feasible points were both optimal, their midpoint would be feasible and, by strong convexity, would have a strictly smaller objective value, yielding a contradiction.
We may therefore write
\begin{equation*}
    S(x)=\{y^\star(x)\},
\end{equation*}
which is a singleton set containing the unique lower minimizer $y^\star(x)$.

For the fixed $x$, let $(u,s,\lambda)$ be any feasible lift in (\ref{prob:approx_single_level}) and set $y=\rho(u)$.
By the feasible-point recovery property in \S\ref{sec:asps}, $y$ is feasible for the original lower problem.
For any $\theta \in (0,1)$, convexity of the feasible set implies that
\begin{equation*}
    y_\theta = (1 - \theta) y^\star(x) + \theta y
\end{equation*}
is lower feasible.
By optimality of $y^\star(x)$ and $\mu$-strong convexity,
\begin{equation*}
    \begin{array}{rcl}
        f_0(x, y^\star(x)) &\leq& f_0(x, y_\theta)\\[5pt]
        &\leq& \displaystyle{(1 - \theta) f_0(x, y^\star(x)) + \theta f_0(x, y) - \frac{\mu}{2} \theta(1 - \theta) \norm{y - y^\star(x)}_2^2}.
    \end{array}
\end{equation*}
After rearranging and dividing by $\theta>0$, we obtain
\begin{equation*}
    f_0(x, y) - p^\star(x) \geq \frac{\mu}{2}(1 - \theta) \norm{y - y^\star(x)}_2^2.
\end{equation*}
Taking $\theta \to 0$ yields
\begin{equation*}
    f_0(x, \rho(u)) - p^\star(x) \geq \frac{\mu}{2} \norm{\rho(u) - y^\star(x)}_2^2.
\end{equation*}
Recall that proposition~\ref{prop:lower_certificate} bounds the left-hand side by
$\epsilon$.
Rearranging yields (\ref{eq:lower_distance_bound}).
\end{proof}

\section{Consistency of globally solved relaxations}\label{sec:gap_continuation_consistency}
For $\epsilon \geq 0$, let $\mathcal{W}_\epsilon$ denote the feasible set of the relaxed problem (\ref{prob:approx_single_level}) in the full lifted space of $w = (x,u,s,\lambda)$.
In particular, $\mathcal{W}_0$ is the feasible set of the exact single-level reformulation (\ref{prob:single_level}) (\ie, when $\epsilon = 0$), since $s \in \cK$ and $\lambda \in \cK^*$ imply $\lambda^T s \geq 0$.
For the result below, we additionally assume that the exact reformulation (\ref{prob:single_level}) is feasible and that $\mathcal{W}_{\tilde{\epsilon}}$ is compact for some $\tilde{\epsilon} > 0$.
The compactness of $\mathcal{W}_{\tilde{\epsilon}}$ is only a sufficient condition and may be replaced by any application-specific condition ensuring that the sequence of lifted solutions introduced below has an accumulation point.

\begin{proposition}\label{prop:gap_continuation}
    \emph{Consistency of globally solved relaxations.}
    Suppose that the assumptions in \S\ref{sec:asps} and the lifted-compactness condition above hold.
    Let the sequence $(\epsilon^{(k)})$ satisfy $0 < \epsilon^{(k)} \leq \tilde{\epsilon}$ and decrease to zero for $k = 0, 1, 2, \ldots$, and let
    \begin{equation*}
        w^{\star(k)} = (x^{\star(k)}, u^{\star(k)}, s^{\star(k)}, \lambda^{\star(k)})
    \end{equation*}
    be a global solution of (\ref{prob:approx_single_level}) with $\epsilon = \epsilon^{(k)}$.
    Then every accumulation point of the sequence $(w^{\star(k)})$ is a global solution of the exact reformulation (\ref{prob:single_level}), and the optimal values of the relaxed problems converge to the optimal value of the exact reformulation.
\end{proposition}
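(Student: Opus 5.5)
The plan is a standard epi-convergence argument for a nested family of feasible sets, written in the lifted space. Write $\Phi(w) = F_0(x,\rho(u))$ for the lifted objective, and let $v_\epsilon = \inf\{\Phi(w) \mid w \in \mathcal{W}_\epsilon\}$.

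First I record the monotonicity of the feasible sets. For $0 \leq \epsilon' \leq \epsilon$ the sets are nested, $\mathcal{W}_{\epsilon'} \subseteq \mathcal{W}_\epsilon$, and $\mathcal{W}_0 = \bigcap_{\epsilon>0}\mathcal{W}_\epsilon$. The intersection identity uses the fact, already noted above, that $\lambda^T s \geq 0$ on $\cK\times\cK^*$.

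Next I check that each $\mathcal{W}_\epsilon$ is closed. Every defining condition is continuous:
\begin{itemize}
    \item the upper constraints $F_i(x,\rho(u))\leq 0$, with $F_i$ continuous and $\rho$ affine;
    \item the equations $A(x)u+s=b(x)$ and $A(x)^T\lambda+c(x)=0$, with continuous data;
    \item the memberships $s\in\cK$ and $\lambda\in\cK^*$, with both cones closed;
    \item the inequality $\lambda^Ts\leq\epsilon$, which is continuous in $w$.
\end{itemize}
Since $\mathcal{W}_{\tilde\epsilon}$ is compact, every $\mathcal{W}_\epsilon$ with $\epsilon\leq\tilde\epsilon$ is compact, including $\mathcal{W}_0$. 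That set is nonempty by assumption. Since $\Phi$ is continuous, global solutions exist for every $\epsilon\in[0,\tilde\epsilon]$, so the $w^{\star(k)}$ are well defined. Nestedness then gives $v_{\epsilon^{(k)}} \leq v_0$, and $v_{\epsilon^{(k)}}$ is nondecreasing in $k$.

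Now take an accumulation point $\bar w$, with a subsequence $w^{\star(k_j)}\to\bar w$. Fix any index $K$. For $k_j\geq K$ we have $w^{\star(k_j)}\in\mathcal{W}_{\epsilon^{(k_j)}}\subseteq\mathcal{W}_{\epsilon^{(K)}}$, and this set is closed, so $\bar w\in\mathcal{W}_{\epsilon^{(K)}}$ for every $K$. Equivalently, pass to the limit in $\lambda^{\star(k_j)T}s^{\star(k_j)}\leq\epsilon^{(k_j)}$ directly. Either way $\bar w\in\mathcal{W}_0$, so $\Phi(\bar w)\geq v_0$. On the other side, continuity gives $\Phi(\bar w)=\lim_j v_{\epsilon^{(k_j)}}\leq v_0$. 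Hence $\Phi(\bar w)=v_0$, and $\bar w$ is a global solution of the exact reformulation (\ref{prob:single_level}).

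For the convergence of values, the sequence $v_{\epsilon^{(k)}}$ is monotone and bounded above by $v_0$, so it has a limit $\ell\leq v_0$. Compactness guarantees that an accumulation point exists. Along the corresponding subsequence the previous step gives $\ell=v_0$, so the whole sequence of relaxed optimal values converges to $v_0$.

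The argument is mostly routine. The step that needs the most care is the closedness of $\mathcal{W}_\epsilon$, because it must hold jointly in $(x,u,s,\lambda)$, including the parameter-dependent data. This is where the continuity of $A,b,c$, closedness of the cones, and the closedness of the upper constraint set from the standing assumptions in \S\ref{sec:asps} are used. The other point to be explicit about is that existence of global solutions relies on compactness together with $\mathcal{W}_0\neq\emptyset$.
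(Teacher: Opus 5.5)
Your proof is correct and follows essentially the same route as the paper. Both arguments use nesting of the relaxed feasible sets plus compactness of $\mathcal{W}_{\tilde{\epsilon}}$ to get an accumulation point, closedness of the constraints and $\lambda^T s \leq \epsilon^{(k_j)} \to 0$ to place it in $\mathcal{W}_0$, the bound from $\mathcal{W}_0 \subseteq \mathcal{W}_{\epsilon^{(k_j)}}$ for optimality, and monotonicity of the relaxed optimal values for convergence of the whole sequence; your explicit check that the global solutions $w^{\star(k)}$ exist (compactness plus $\mathcal{W}_0 \neq \emptyset$) is a small addition the paper leaves implicit.
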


\begin{proof}
Because $0 < \epsilon^{(k)} \leq \tilde{\epsilon}$, the nesting of the relaxed feasible sets gives
\begin{equation*}
    w^{\star(k)} \in \mathcal{W}_{\epsilon^{(k)}} \subseteq \mathcal{W}_{\tilde{\epsilon}}.
\end{equation*}
The lifted-compactness condition therefore ensures that the sequence $(w^{\star(k)})$ has an accumulation point.
Let $\bar{w} = (\bar{x}, \bar{u}, \bar{s}, \bar{\lambda})$ be an arbitrary accumulation point, and choose a subsequence such that
\begin{equation*}
    w^{\star(k_j)} \to \bar{w}.
\end{equation*}

The upper constraints are closed by continuity of $F_i$ for $i = 1, \ldots, m$ and $\rho$, the primal- and dual-feasibility equations are continuous, and the cones $\cK$ and $\cK^*$ are closed.
Hence, all constraints of (\ref{prob:single_level}) other than exact complementarity are preserved in the limit.
Moreover, feasibility of each relaxed problem gives
\begin{equation*}
    0 \leq {\lambda^{\star(k_j)}}^T s^{\star(k_j)} \leq \epsilon^{(k_j)} \to 0.
\end{equation*}
Continuity of the inner product then yields
$\bar{\lambda}^T \bar{s} = 0$, so $\bar{w} \in \mathcal{W}_0$.

Let $w \in \mathcal{W}_0$ be arbitrary.
Because $\mathcal{W}_0 \subseteq \mathcal{W}_{\epsilon^{(k_j)}}$, global optimality of $w^{\star(k_j)}$ for its relaxed problem gives
\begin{equation*}
    F_0(x^{\star(k_j)}, \rho(u^{\star(k_j)})) \leq F_0(x, \rho(u)).
\end{equation*}
Passing to the limit and using continuity of $F_0$ and $\rho$ gives
\begin{equation*}
    F_0(\bar{x}, \rho(\bar{u})) \leq F_0(x, \rho(u)).
\end{equation*}
Thus, $\bar{w}$ is globally optimal over $\mathcal{W}_0$.
Because the accumulation point was arbitrary, every accumulation point of $(w^{\star(k)})$ is a global solution of (\ref{prob:single_level}).

Finally, let $v^{\star(k)}$ denote the optimal value of the relaxed problem with tolerance $\epsilon^{(k)}$, and let $v_0^\star$ denote the optimal value of the exact reformulation when $\epsilon = 0$.
Since the feasible sets shrink as $\epsilon^{(k)}$ decreases and $\mathcal{W}_0 \subseteq \mathcal{W}_{\epsilon^{(k)}}$, we have
\begin{equation*}
    v^{\star(k)} \leq v^{\star(k+1)} \leq v_0^\star.
\end{equation*}
Along the convergent subsequence above, continuity of the objective and global optimality of $\bar{w}$ give $v^{\star(k_j)} \to v_0^\star$.
The monotonicity of $(v^{\star(k)})$ then implies that the full sequence converges to $v_0^\star$.
\end{proof}

Under the assumptions in \S\ref{sec:asps}, the conic KKT conditions (\ref{eq:kkt_conic}) are necessary and sufficient for $\rho(u)$ to solve the lower problem for a given $x$.
Thus, the recovery map $(x, u, s, \lambda) \mapsto (x, \rho(u))$ sends every feasible point of the exact reformulation (\ref{prob:single_level}) to a feasible point of the original bilevel problem (\ref{prob:blv}), while every bilevel-feasible point admits such a lift; the upper objective is unchanged under this correspondence.
Consequently, every accumulation point identified by proposition~\ref{prop:gap_continuation} recovers a globally optimal solution of the original bilevel problem.

Note that without global solves (\eg, in algorithm~\ref{alg:gap_cont} and practical applications), any accumulation point of a sequence of exactly feasible relaxed points with $\epsilon^{(k)} \to 0$ is still feasible for the exact reformulation, but it need not be optimal.
For locally or approximately solved relaxations, the lower-level certificate in proposition~\ref{prop:lower_certificate} remains valid whenever primal feasibility, dual feasibility, cone membership, and the relaxed complementarity constraint are satisfied, but convergence to a globally optimal bilevel solution is not guaranteed.

\newpage
\section*{Acknowledgments}
This work has been funded as part of BrainLinks-BrainTools, which is funded by the Federal Ministry of Economics, Science and Arts of Baden-W\"urttemberg within the sustainability program for projects of the Excellence Initiative II, and CRC/TRR 384 ``IN-CODE''.

\newpage
\bibliography{refs}

\end{document}